\newtheorem{Thm}{Theorem}
\newtheorem{Cor}{Corollary}
\newtheorem{Lem}{Lemma}
\newtheorem{Prop}{Proposition}
\theoremstyle{definition}
\newtheorem{Def}{Definition}
\theoremstyle{remark}
\begin{document}

 
\title{Continued fractions for rational torsion}

\author{Katthaleeya Daowsud} 
\address{Department of Mathematics, Faculty of Science\\
Kasetsart University\\50 Ngam Wong Wan Rd, Ladyaow, Chatuchak\\
Bangkok, 10900, Thailand}
\email{fsciky@ku.ac.th}

\author{Thomas A. Schmidt}
\address{Oregon State University\\Corvallis, OR 97331}
\email{toms@math.orst.edu}
\keywords{continued fractions, genus 2 curves, torsion, Jacobians, rational point of order 11}
\subjclass[2010]{11Y65, 11G30, 14H40, 14Q05}
\date{18 August 2022}

\begin{abstract}   We exhibit a method to use continued fractions in function fields to find new families of hyperelliptic curves over the rationals with given torsion order in their Jacobians.    To show the utility of the method, we exhibit a new infinite family of curves over $\mathbb Q$ with genus two whose Jacobians have  torsion order eleven. 

 {\bf  In this updated version, we correct an error in the initial paper:}  The ``new" family claimed in the original Theorem~1  was pointed out by Professor D.~Lorenzini to have elements isomorphic with elements in Flynn's family (as defined in the paper); his guess that the families were the same up to element-wise isomorphism is correct.  Here, we give a family that is new; the old proof, now free of clerical error (we had replaced our $g_u(x)$ by $1+g_u(x)$ when determining Igusa invariants), holds.   Changes from the original version are flagged by {\color{red} UPDATED}; there are three of these: the new family $g_u(x)$; its partial quotients; the way to solve in the naive approach to find this new family.   (We also added an acknowledgement section.) Finally, as an appendix we include a PDF export of Maple calculations verifying our correction.
\end{abstract}

\maketitle


\section{Introduction}

As Cassels and Flynn express in the introduction to their text \cite{CasselsFlynn96}, there is still a need for interesting examples of curves of low genus over number fields.   Here, we show that the decidely  ``low brow'' method of continued fractions over function fields continues to have much to offer.     

We show that with a fixed base field, (low) genus and desired (small) torsion order,  one can search fairly easily for hyperelliptic curves of the given genus over the field whose divisor at infinity is of the given order.   As we recall with more details below,  the divisor at infinity has finite order in the Jacobian of the hyperelliptic curve if and only if  a corresponding continued fraction expansion,  in polynomials,  is periodic;  the order itself is  the sum of the degrees of initial partial quotients.   Given genus $g$ and torsion order $N$,  there are then finitely many possible partitions for the degrees of these initial partial quotients;  by making appropriate choices relating the coefficients of these partial quotients,  it is often possible to determine a curve with the desired genus and order. 

It has been known since 1940 \cite{BillingMahler40}  that 11 is the smallest prime for which there is no elliptic curve defined over the rationals with rational point of order the prime.  
Thus, our continued fraction approach must certainly fail with $k= \mathbb Q$, and $N=11$,  $g=1$.    It is natural to ask about $N=11$ and higher genus.   Indeed,  using a different method, Flynn \cite{Flynn90}, \cite{Flynn91} gave a one-dimensional family $\mathcal F_t$, see \eqref{e:flynn},    of hyperelliptic curves with $g=2$ and $N=11$.  Much more recently,  Bernard {\em et al} \cite{BernardLeprevostPohst09}  found some 18 additional individual curves with $(g,N) = (2, 11)$. (They state that they have found 19, but their table of results  lists one curve twice).    They explicitly state that they sought  infinite families of such curves.   

We exhibit a new infinite family  of this type. 
 {\color{red} UPDATED:} Let 
\begin{equation}\label{eq:NewFamily}
\begin{aligned}
x^6&-\dfrac{16 \,u}{u^5+8}\, x^5 -\dfrac{u^{15}+40 \,u^{10}+128 \,u^5+512}{2 \,u^3\, (u^5+8)^2} \,x^4 + \dfrac{ 6 \,u^{15}+ 224\, u^{10}+1536\, u^5+3072}{u^2 \,(u^5+8)^3}\,x^3\\
\\
&+ \dfrac{u^{30}+112 \,u^{25}+2880 \, u^{20}+25600\,  u^{15}+106496 \, u^{10}+262144 \,u^5+262144}{16 \,u^6 \,(u^5+8)^4} \, x^2 \\     
\\
&-\dfrac{u^{25}+80 \,u^{20}+1664 \,u^{15}+12288 \,u^{10}+36864 \,u^5+32768}{2 \, u^5 \, (u^5+8)^4}\, x\\
\\
& - \dfrac{u^{25}+46 \, u^{20}+736 \, u^{15}+5248\, u^{10}+ 18432 \, u^5+ 24576}{2 \, u^4 \,(u^5+8)^4}.
\end{aligned} 
\end{equation}

\begin{Thm}\label{t:newFam}    For each $u\in \mathbb Q\setminus\{0\}$, let  $\mathcal G_u$ be the smooth projective curve of affine equation $y^2 = g_u(x)$.  Then the divisor at infinity of the Jacobian of $\mathcal G_u$ has order 11.  There are infinitely many non-isomorphic $\mathcal G_u$, none of which is isomorphic to any of Flynn's curves $\mathcal F_t$.
\end{Thm}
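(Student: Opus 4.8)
The plan is to treat the three assertions in turn.

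\emph{The order is $11$.} Let $\infty_+,\infty_-$ be the two points of $\mathcal G_u$ over $x=\infty$; they are rational and distinct because $g_u$ is monic of even degree, and for $u\neq 0$ a short computation with the discriminant of $g_u$ shows that $g_u$ is separable, so $\mathcal G_u$ is a smooth genus-$2$ curve and $\mathfrak d_u:=[\infty_+]-[\infty_-]$ is a well-defined point of $\operatorname{Jac}(\mathcal G_u)(\mathbb Q)$. Running the continued fraction algorithm on $\sqrt{g_u}$ produces polynomials $p_u,q_u\in\mathbb Q[u,x]$, of degrees $11$ and $8$ in $x$, and a $c_u\in\mathbb Q[u]$ that is (up to a rational constant) a power of $u$, so that $c_u\neq 0$ for $u\neq 0$, satisfying the polynomial Pell identity \[ p_u^2 - g_u\, q_u^2 = c_u. \] I would display $p_u,q_u,c_u$ explicitly and verify this identity by expansion. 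Granting it, $p_u-q_u y$ is regular on the affine curve and has no affine zero (such a zero would force $c_u=0$), while at infinity its leading term cancels that of $q_u\sqrt{g_u}$ on exactly one branch; hence, after replacing $q_u$ by $-q_u$ if necessary, $p_u-q_u y$ has a single pole, of order $11$, at $\infty_-$, and therefore — its divisor having degree $0$ — $\operatorname{div}(p_u-q_u y)=11\,\infty_+-11\,\infty_-$. Thus $11\,\mathfrak d_u=0$ in $\operatorname{Jac}(\mathcal G_u)$, so the order of $\mathfrak d_u$ divides $11$; and $\mathfrak d_u\neq 0$, since a genus-$2$ curve carries no nonconstant function with a single simple pole. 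As $11$ is prime, $\mathfrak d_u$ has order exactly $11$.

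\emph{Infinitely many isomorphism classes.} I would compute the absolute Igusa invariants $j_1(u),j_2(u),j_3(u)\in\mathbb Q(u)$ of the sextic $g_u$ and check that at least one of them, say $j_1$, is non-constant. A non-constant element of $\mathbb Q(u)$ takes infinitely many distinct values as $u$ ranges over $\mathbb Q\setminus\{0\}$, and genus-$2$ curves with distinct absolute Igusa invariants are non-isomorphic even over $\overline{\mathbb Q}$; hence the $\mathcal G_u$ fall into infinitely many isomorphism classes, a fortiori into infinitely many classes over $\mathbb Q$.

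\emph{Distinct from Flynn's family.} Each of $\{\mathcal G_u\}$ and Flynn's $\{\mathcal F_t\}$ sweeps out a curve in the coarse moduli space $M_2$; call the images $\mathcal C$ and $\mathcal C'$, described concretely in the affine coordinates $(j_1,j_2,j_3)$ (equivalently, in the weighted projective space of Igusa invariants). Using the explicit invariants of $g_u$ and of Flynn's sextic I would eliminate the parameters to obtain equations defining $\mathcal C$ and $\mathcal C'$, show that the two curves are distinct, and then — $\mathcal C\cap\mathcal C'$ being finite — verify that no point of the intersection is simultaneously $[\mathcal G_u]$ for some $u\in\mathbb Q\setminus\{0\}$ and $[\mathcal F_t]$ for some admissible $t$; concretely, that the resultant in $t$ of the invariant-matching equations, viewed as a polynomial in $u$, has no such rational root, or, better still, that $\mathcal C\cap\mathcal C'=\varnothing$ outright. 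The heavy but conceptually routine input is the Pell identity of the first part; the genuine obstacle is this last step, namely controlling the intersection inside the threefold $M_2$ of two explicit curves that both lie on the locus of genus-$2$ curves whose Jacobian has a rational point of order $11$ (so their distinctness is not a formality), and — should the Igusa invariants happen to coincide at some rational point — distinguishing true isomorphism from isomorphism up to quadratic twist by a comparison of twisting characters.
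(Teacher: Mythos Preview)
Your proposal is essentially the paper's proof: the Pell identity you invoke is equivalent to the periodic continued fraction expansion of $\sqrt{g_u}$ that the paper displays explicitly (your $(p_u,q_u)$ of degrees $(11,8)$ is precisely the convergent at the end of the quasi-period of length $7$, and the paper reads off $N=11$ from the degree formula $N=g+1+\sum_{i=1}^{m-1}\deg a_i$), and for the remaining two assertions the paper carries out exactly the Igusa-invariant and pairwise-resultant computations you outline, finding that after removing factors of $D_t$ and $A_t$ the gcd of the resultants is a nonzero constant. Your closing concern about quadratic twists is superfluous: distinct absolute Igusa invariants already rule out isomorphism over $\overline{\mathbb Q}$, which is all the paper claims.
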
 

The proof that the torsion orders are 11 is given in  Lemma~\ref{l:newFamsCFexpansion}.  In Subsection~\ref{ss:NewInG2} we sketch the computation that this is a new infinite family.   Our naive method which led us to this new family of curves (and other similar curves) is discussed in Sections~\ref{s:improvedBound} through ~\ref{s:Applications}.

\bigskip
That finite torsion order is related to periodicity of continued fraction expansions   is a notion that can be traced back to Abel and Chebychev.  We first learned of this history, and the  relationship itself,  from work of Adams and Razar \cite{AdamsRazar80}.   Other authors who have discussed these notions include Berry \cite{Berry90} and van der Poorten with various coauthors, see {\em e.g.} \cite{vanderPoortenTran00}, \cite{vanderPoortenPappalardi05}.    See also the recent work of Platonov \cite{Platonov}.

The study of the arithmetic of function fields over finite fields goes back at least to E.~Artin's Ph.D. dissertation, \cite{Artin24}.   Much more recently,  
Friesen in particular has studied the structure of class groups using continued fractions, see say \cite{Friesen92}.    Our method can be viewed as a variant of that used by Friesen;  whereas he solves for the initial partial quotient in terms of the remaining terms of a given (quasi)-period, for small genus we find it more practical to solve for a quasi-period (or period) satisfying small sets of constraints. 
 
\section{Continued fractions and torsion at infinity} 

\subsection{Divisor at infinity}
If $k$ is a field of  characteristic zero (or sufficiently large),  then each hyperelliptic curve $\mathcal C$ of even genus $g$ over $k$ is $k$-isomorphic to a curve of affine equation 
\begin{equation}\label{eq:hyperellFormula}\mathcal C:  y^2 = u_{0} x^{2g+2} + u_{1} x^{2g+1} + \cdots + u_{2 g+ 1} x + u_{2 g+ 2}\,
\end{equation}
with coefficients $u_i \in k$, and $u_0$ a square in $k$, see \cite{Mestre1991}.     The affine curve $\mathcal C$ can be completed to a projective curve, for which one can take a smooth model (which we also denote by $\mathcal C$).
      
The {\em divisor group} of $\mathcal C$, $\text{Div}(\mathcal C)$, is the free abelian group generated by the $\bar k$-points of $\mathcal C$.   The divisors of degree zero, $\text{Div}^{0}(\mathcal C)$ is the kernel of the homomorphism from the  group $\text{Div}(\mathcal C)$ to the integers  defined by sending a weighted sum of points to the sum of these weights.    A {\em rational function} on $\mathcal C$ is a function to $\mathbb P^1(\bar k)$ that can be locally expressed as the quotient of polynomials,  the {\em divisor of a rational function} $\phi$ is the element of $\text{Div}(\mathcal C)$ given by the difference of the  zeros and poles of $\phi$, with multiplicity.      It is a classical result that the divisors of the rational functions on $\mathcal C$ form a subgroup of $\text{Div}^{0}(\mathcal C)$, called the subgroup of {\em principal divisors}.   The basic algebraic definition of the {\em Jacobian} of $\mathcal C$ is as the quotient group
\[ \text{Jac}(\mathcal C) = \text{Div}(\mathcal C)/\text{Div}^{0}(\mathcal C)\,.\]

The completion of the affine model of $\mathcal C$ leads to   two points at infinity, say $P, Q$.     Their formal difference then defines the {\em divisor at infinity},  $D_{\infty} = P-Q$,  which we take to be the corresponding element of $\text{Jac}(\mathcal C)$.   The  hyperelliptic involution, defined by  $(x,y) \mapsto (y,-x)$ interchanges $P$ and $Q$, and thus $D_{\infty}$ defines a point of $\text{Jac}(\mathcal C)$ defined over  $k$.    We say that $D_{\infty}$ is {\em   torsion of order $N$} if its class in 
$\text{Jac}(\mathcal C)$ has order $N$.

The goal of this work is to exhibit an elementary method to discover examples of hyperelliptic $\mathcal C$ of given genus $g$ whose  divisor at infinity is torsion of given order $N$.

\subsection{Continued fractions in function fields}  
 
   Given any field $k$,  the order of vanishing of a polynomial $f \in k[x]$ at the origin $x=0$ extends to define a valuation on the quotient field, the field of rational functions $k(x)$.       The valuation is simply given by writing any non-zero rational function as an integral power of $x$ times a rational function with neither zero nor pole at $x=0$; the exponent of $x$ is then the valuation of the initial rational function.    One defines a metric on $k(x)$ in the usual manner; the completion of this field with respect to the metric of the ring of formal Laurent series,   $k((x))$. 
   
   The point at infinity on the projective line over the field $k$ can be viewed as corresponding to the vanishing of $x^{-1}$.  A second valuation on $k(x)$,  leads to a completion that is $k((x^{-1}))$.    For  $\alpha \in k((x^{-1}))$, say 
\[ \alpha = c_{-n} x^n + c_{-n+1} x^{n-1} + \cdots  + c_{-1} x + c_0 + c_1 x^{-1} + c_2 x^{-2} + \cdots\,,\]
we define the {\em polynomial part} of $\alpha$ as 
\[ \lfloor\, \alpha\, \rfloor  = c_{-n} x^n + c_{-n+1} x^{n-1} + \cdots  + c_{-1} x + c_0\, .\]
We then define a continued fraction algorithm by way of the following sequences.  Let $\alpha_0 = \alpha$;   for $i \ge 0$ let $a_i =  \lfloor\, \alpha_i\, \rfloor$ and while $\alpha_i - a_i \neq 0$,  let $\alpha_{i+1} = ( \alpha_i - a_i)^{-1}$.       We then find an expansion of the form 
\begin{equation*}
\alpha=a_0+\cfrac{1}{a_1+\cfrac{1}{a_2+\cfrac{1}{a_3+\ddots}}} =:  [a_0; a_1, a_2, \dots, ]\,,
\end{equation*}
where we use the flat notation for typographic ease. The $a_i$ are called the {\em partial quotients} of the expansion.

The local ring of regular functions at a non-singular point of a projective curve is a discrete valuation ring.   In particular,  its maximal ideal, defined by vanishing at the point,  is principal.  Any generator of this maximal ideal is called a local uniformizer.    For    the point at infinity of $\mathbb P^1$ we can take $x^{-1}$ as a local uniformizer, thus leading to the valuation on $k(x)$  and the continued fractions as  above.  
 
   Our insistence on affine equations of the form of Equation ~\eqref{eq:hyperellFormula} is so that the degree two map from $\mathcal C$ to $\mathbb P^1$ defined by $(x,y) \mapsto x$ is such that the point at infinity of $\mathbb P^1$  has two pre-images.   The local rings at these points are then isomorphic to the local ring uniformized by $x^{-1}$.   From this,  it follows that there is a square root of $y^2 = f(x)$ in   $k((x^{-1}))$.   Indeed,  the completion of the ``rational field'' $k(x)$ with respect to the metric from our valuation is analogous to the completion of $\mathbb Q$ with respect to the usual metric.   Our $y = \sqrt{f(x)}$ is of the type that Artin \cite{Artin24}  called ``real quadratic'' --- exactly because it is a value in the completed field.   
  
Recall that the regular continued fraction expansion of $\sqrt{d}$ for non-square integers $d>0$ is periodic and with a palindromic period.   In general,  it is not true that $y = \sqrt{f(x)}$ has a periodic expansion,   but when it does the expansion shows similar symmetry.       In fact,  a new phenomenon arises: There may be further symmetry inside the period.

\begin{Def}   We call an even length sequence of $(a_1, \dots, a_{2 \ell})$ {\em skew symmetric} of {\em skew value} $\gamma$ if  
\[ a_{2(\ell -i)} = c_i\,a_{2i+1}\;\;\;\; \forall \, 0 \le i < \ell\,,\]
where 
\[ c_i = \begin{cases} \gamma &\mbox{if}\;\;2|i\,;\\
                                   \gamma^{-1}&\mbox{otherwise}\,,
            \end{cases}
\] 
with nonzero $\gamma \in k$.
\end{Def}

 Proofs of the following are given in \cite{Friesen92} when $k$ is a finite field, and in \cite{vanderPoortenTran00} in general; see also 
\cite{AdamsRazar80},  
\cite{Schmidt00} and \cite{vanderPoortenPappalardi05}.   We use an overline to denote a repetition of a sequence of partial quotients.  

\begin{Thm}\label{t:formPeriodic}   Suppose that a non-square $f(x) \in k[x]$ is of even degree, with leading coefficient a square in $k$.   If the continued fraction expansion of the Laurent series of $\sqrt{f(x)}$ is periodic,  then it is of the form 
\begin{equation}\label{eq:basicForm}
 \sqrt{f(x)}  = [\, a_0; \overline{a_1, \dots, a_{m-1}, 2 \kappa a_0 , a_{m-1}, \dots, a_1, 2 a_0}]\,,
 \end{equation}
where $(a_1, \dots, a_{m-1})$  is skew symmetric of skew value $\kappa$.   
\end{Thm}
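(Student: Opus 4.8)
The plan is to run, inside $k((x^{-1}))$, the classical theory of continued fractions of real quadratic irrationalities, and to read off all of the symmetry from comparing complete quotients with their conjugates. First I would introduce the complete quotients: writing $\alpha_0=\sqrt f$ and $d=\tfrac12\deg f$, define polynomials $P_i,Q_i$ by $\alpha_i=(\sqrt f+P_i)/Q_i$, with $a_i=\lfloor\alpha_i\rfloor$, $P_{i+1}=a_iQ_i-P_i$ and $Q_{i+1}=(f-P_{i+1}^2)/Q_i$. An induction gives $P_0=0$, $Q_0=1$; shows that the $Q_i$ stay polynomial (because $Q_i\mid f-P_i^2$ and $P_{i+1}\equiv-P_i\bmod Q_i$); and shows that for $i\ge 1$ the quotient $\alpha_i$ is reduced, i.e. $\deg a_i\ge 1$, $\deg P_i=d$ with the same leading coefficient as $\sqrt f$, and $\deg Q_i<d$, equivalently $|\alpha_i|>1$ and $|\overline{\alpha_i}|<1$ at the place at infinity. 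Reducedness makes the algorithm invertible on the tail, so periodicity of the expansion forces the tail to be purely periodic: $\sqrt f=[\,a_0;\overline{a_1,\dots,a_\ell}\,]$ for the minimal period length $\ell$, together with the usual palindromic symmetries of the data $(P_i,Q_i)$ (in particular $Q_\ell=Q_0=1$).

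Second, I would recover the part of the statement that is already known. Since $\alpha_1=(\sqrt f+a_0)/(f-a_0^2)$, one has $-1/\overline{\alpha_1}=(f-a_0^2)/(\sqrt f-a_0)=\sqrt f+a_0=2a_0+1/\alpha_1$, so $-1/\overline{\alpha_1}=[\,2a_0;\overline{a_1,\dots,a_\ell}\,]$. On the other hand, transposing the product of the convergent matrices $\left(\begin{smallmatrix}a_i&1\\1&0\end{smallmatrix}\right)$ yields the reversal identity $-1/\overline{\alpha_1}=[\,\overline{a_\ell,a_{\ell-1},\dots,a_1}\,]$. Comparing the two expansions (uniqueness holds, every partial quotient past index $0$ having positive degree) gives $a_\ell=2a_0$ and $a_i=a_{\ell-i}$ for $1\le i<\ell$. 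This is exactly what is proved in \cite{AdamsRazar80}, \cite{Friesen92} and \cite{vanderPoortenTran00}, so one may quote it.

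Third, and this is the new ingredient, I would locate the inner half-period. Let $m$ be the least index with $Q_m$ a nonzero constant; it exists with $m\le\ell$ because $Q_\ell=1$, and since $Q_m$ is constant a one-line computation gives $\alpha_{m+1}=Q_m\,\alpha_1$, hence $\overline{\alpha_{m+1}}=Q_m\,\overline{\alpha_1}$ by $k$-linearity of conjugation. Applying the reversal identity to the purely periodic $\alpha_{m+1}$, and on the other side using $-1/\overline{\alpha_{m+1}}=Q_m^{-1}(\sqrt f+a_0)=Q_m^{-1}[\,2a_0;\overline{a_1,\dots,a_\ell}\,]$ together with the scaling rule $c\,[b_0;b_1,b_2,b_3,\dots]=[\,cb_0;c^{-1}b_1,cb_2,c^{-1}b_3,\dots\,]$, one reads off that the partial quotient in position $m$ equals $2a_0/Q_m$, and that the remaining positions force $\ell=2m$ with $m$ odd and the inner block $(a_1,\dots,a_{m-1})$ skew symmetric in the sense of the definition above, of skew value a fixed power of $Q_m$; writing that power as $\kappa$ gives the asserted normal form.

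The step I expect to be the main obstacle is the last one: keeping exact track of the alternating constants $Q_m^{\pm 1}$ produced by the scaling rule, reconciling them with the palindromic identifications so that the resulting relations land precisely on $a_{2(\ell'-i)}=c_i\,a_{2i+1}$ with the prescribed $c_i$, and thereby certifying both that $m$ is odd (so $(a_1,\dots,a_{m-1})$ has the even length the definition requires) and that the full period is $2m$ rather than a larger multiple of $m$ — the latter being where one invokes, over $\mathbb Q$ or any $k$ whose roots of unity are $\pm1$, that periodicity pins down the quasi-period multiplier. One should also dispose separately of the degenerate small cases (period $2$, empty inner block).
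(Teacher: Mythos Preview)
The paper does not supply its own proof of this theorem; immediately before the statement it writes that proofs are given in \cite{Friesen92} (finite fields) and \cite{vanderPoortenTran00} (general $k$), with further pointers to \cite{AdamsRazar80}, \cite{Schmidt00}, \cite{vanderPoortenPappalardi05}. Your outline is precisely the argument one finds in those sources, especially van der Poorten--Tran: set up the $(P_i,Q_i)$ recursion for the complete quotients $\alpha_i=(\sqrt f+P_i)/Q_i$, prove reducedness for $i\ge 1$ and hence pure periodicity of the tail, obtain $a_\ell=2a_0$ and the palindrome $a_{\ell-i}=a_i$ from the reversal identity for $-1/\overline{\alpha_1}$, and finally isolate the quasi-period as the least $m$ with $Q_m\in k^\times$, using $\alpha_{m+1}=Q_m\,\alpha_1$ together with the scaling rule $c\,[b_0;b_1,b_2,\dots]=[cb_0;c^{-1}b_1,cb_2,\dots]$ to read off $a_m=2\kappa a_0$ and the skew symmetry. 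So there is no genuine methodological contrast to draw: you have written out what the paper outsources.

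Two remarks on the sketch. First, the clause ``force $\ell=2m$ with $m$ odd'' presupposes $Q_m\neq 1$; when $Q_m=1$ one has $\alpha_{m+1}=\alpha_1$, hence $m=\ell$, $\kappa=1$, and the skew symmetry collapses to the ordinary palindrome already established---this is exactly the case the paper's subsequent Definition isolates, and you should say so rather than fold it into ``degenerate small cases''. Second, the appeal to ``$k$ whose roots of unity are $\pm1$'' is unnecessary. Combining $a_{m-j}=Q_m^{(-1)^{j+1}}a_j$ (your scaling-plus-palindrome comparison) with its own reflection $j\mapsto m-j$ forces $Q_m^{\pm 2}=1$ whenever $m$ is even; the residual possibility $Q_m=-1$ is then excluded by evaluating at $j=m/2$, which would give $a_{m/2}=-a_{m/2}=0$, contradicting $\deg a_{m/2}\ge 1$. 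With that bookkeeping in hand, the oddness of $m$ and the identification $\ell=2m$ (when $\kappa\neq 1$) follow over any $k$, with no extra hypothesis on roots of unity.
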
 

\begin{Def}    In the setting of Theorem~\ref{t:formPeriodic},  we call (the minimal) $m$ the {\em quasi-period} length.  
   
Exactly when $\kappa = 1$, we have equality of the (minimal) period length with the quasi-period length.    
Furthermore,  if these lengths are unequal, then the quasi-period length $m$ must be odd; we then speak of a {\em strict} quasi-period length.  

In what follows, we use $m$ to denote the quasi-period length of an expansion, including the case where this is the period length. 
\end{Def}

\bigskip
Direct calculation gives the partial quotients for the members of the family $\mathcal G_u$. {\color{red}   UPDATED.}
 
\begin{Lem}\label{l:newFamsCFexpansion}   For $y^2 = g_u(x)$ and $u \in \mathbb Q\setminus\{0\}$, the continued fraction expansion of  $y$ is periodic of  period length  $m= 8$.  The initial partial quotients of this expansion are 
\[\begin{aligned}
a_0(x) &= x^3-\dfrac{8 u}{u^5+8}\, x^2 - \dfrac{u^{15}+40 \,u^{10}+256\,u^5+512}{4 u^3 (u^5+8)^2}\, x + \dfrac{u^{10}+24 \,u^5+64}{u^2 (u^5+8)^2}\\
a_1(x) &=u\,x -\dfrac{4\, u^2}{u^5+8}\\
a_2(x) &= - \dfrac{u^5+8}{2\,u^3}\, x+ \dfrac{2}{u^2}\\
a_3(x) &= u\, x\\
a_4(x) &=  u \, x^2  -\dfrac{u^2\, (u^5+24)}{2\,(u^5+8)}\,  x - \dfrac{2\,u^{10}+16\,u^5+128}{u^2\,(u^5+8)^2},
\end{aligned}
\]
with $a_8(x) = 2 a_0(x)$, and the rest of the period is determined by the palindromic property.     
\end{Lem} 

\subsection{Periodic continued fractions identify torsion divisors}

The following is given by Adams and Razar \cite{AdamsRazar80} in the genus $g=1$ case.    The general case is proven in \cite{vanderPoortenTran00}, see also \cite{vanderPoortenPappalardi05}.    W.~Schmidt \cite{Schmidt00} presents a large portion of this result in his Lemma~7.

\begin{Thm}\label{t:quasiPeriodAndOrder} Suppose that $\mathcal C$ is given by $y^2 = f(x)$, with $f(x)$ as in the right hand side of \eqref{eq:hyperellFormula}.    If $\sqrt{f(x)}$ has periodic continued fraction expansion as in \eqref{eq:basicForm}, then  the divisor at infinity $D_{\infty}$ is torsion of order 
\begin{equation} \label{e:relateNandDegrees} N = g+1 + \sum_{i=1}^{m-1} \, (\deg a_i)\,.\end{equation}
Furthermore,  $1 \le \deg a_i   \le g$ for each $1\le i \le m-1$.
 \end{Thm}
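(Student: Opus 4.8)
\textbf{Proof proposal for Theorem~\ref{t:quasiPeriodAndOrder}.}
The plan is to translate the classical correspondence between the continued fraction algorithm on $k((x^{-1}))$ and the group law on $\mathrm{Jac}(\mathcal C)$, following the approach of Adams--Razar in genus one and its generalization in van der Poorten--Tran. The central object is the sequence of \emph{complete quotients} $\alpha_i$ of $\sqrt{f(x)} = \sqrt{f}$, which all lie in the quadratic function field $K = k(x)(\sqrt f)$; writing $\alpha_i = (P_i + \sqrt f)/Q_i$ with $P_i, Q_i \in k[x]$ (where $Q_i \mid f - P_i^2$), the ideal $\mathfrak a_i = Q_i k[x] + (\sqrt f - P_i)k[x]$ is the standard ``reduced'' ideal attached to step $i$. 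The key structural fact to establish first is the divisor-theoretic interpretation: the principal divisor of the convergent denominator $B_{i-1}(x)$ (the denominator of $[a_0;a_1,\dots,a_{i-1}]$) together with $\sqrt f \mp \,$stuff encodes the divisor class $(\deg \mathfrak a_i)\cdot$(something) plus a multiple of $D_\infty$. Concretely, I would show that for each $i$ the rational function $\Theta_i := B_{i-1}\sqrt f - A_{i-1}$ on $\mathcal C$ has divisor
\[
\mathrm{div}(\Theta_i) = \mathfrak D_i - \mathfrak D_i^{\sigma} - \Big(\textstyle\sum_{j=1}^{i-1}\deg a_j\Big) D_\infty,
\]
where $\mathfrak D_i$ is the effective divisor of degree $\sum_{j<i}\deg a_j$ cut out by $Q_{i-1}=0$ on the relevant sheet and $\sigma$ is the hyperelliptic involution. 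This is proved by induction on $i$ using the recursions $A_i = a_i A_{i-1}+A_{i-2}$, $B_i = a_i B_{i-1}+B_{i-2}$ and the defining relation $\alpha_i = (P_i+\sqrt f)/Q_i$, tracking orders of vanishing at the two points at infinity via the polynomial-part (floor) operation.

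Next I would exploit periodicity. If $\sqrt f$ has a periodic expansion of the form~\eqref{eq:basicForm} with (quasi-)period length $m$, then $\alpha_m = \kappa^{\pm 1}\alpha_0$ up to the standard normalization, so $Q_{m-1}$ is a nonzero constant and the divisor $\mathfrak D_m$ above becomes supported only at infinity. Feeding this into the divisor formula shows that $\Theta_m = B_{m-1}\sqrt f - A_{m-1}$ has divisor exactly $\big(g+1+\sum_{j=1}^{m-1}\deg a_j\big)(Q-P)$, i.e. a principal multiple of $D_\infty$; here the extra $g+1$ appears because the pole order of $\sqrt f$ at each point at infinity is $g+1$ (since $\deg f = 2g+2$), which combines with the $\deg B_{m-1}$ contribution. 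Conversely, if $D_\infty$ has order $N$, minimality of $m$ forces $N$ to be exactly $g+1+\sum_{j=1}^{m-1}\deg a_j$: any smaller principal multiple of $D_\infty$ would produce, by reversing the divisor computation, an earlier index $i<m$ with $Q_{i-1}$ constant, contradicting minimality of the (quasi-)period. This gives~\eqref{e:relateNandDegrees}.

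Finally, for the degree bounds $1 \le \deg a_i \le g$: the lower bound is immediate since each $\alpha_i$ (for $i\ge 1$) is a complete quotient with $|\alpha_i| > 1$ in the $x^{-1}$-adic absolute value, so $a_i = \lfloor \alpha_i\rfloor$ has positive degree. For the upper bound, from $\alpha_i = (P_i + \sqrt f)/Q_i$ with $\deg P_i \le g$, $\deg(f-P_i^2) \le 2g+1$ actually $\le g + \deg Q_{i-1}$ by the reduction inequalities, and $\deg Q_i < g+1$ for reduced ideals, one deduces $\deg a_i = \deg(P_i + \lfloor\sqrt f\rfloor) - \deg Q_i \le (g+1) + \text{(something)} - \deg Q_i$; carefully bookkeeping the reduction estimates $\deg Q_i \le g$ and $\deg Q_{i-1}+\deg Q_i \le 2g+2 - \deg a_i$ (these are the function-field analogues of the classical bounds $0 < Q_i < 2\sqrt d$) yields $\deg a_i \le g$. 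I expect the main obstacle to be the bookkeeping in the inductive divisor computation of the first paragraph: one must correctly account for the two points at infinity, the sign conventions in the hyperelliptic involution, and the interaction between the floor operation and the pole orders, so that the degree of the infinite part of $\mathrm{div}(\Theta_i)$ comes out to exactly $g+1+\sum_{j<i}\deg a_j$ rather than off by $g$ or $1$. Once that is pinned down, periodicity and the degree bounds follow from routine manipulations of the reduction inequalities.
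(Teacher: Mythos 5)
You should know at the outset that the paper does not prove Theorem~\ref{t:quasiPeriodAndOrder} at all: it is quoted from the literature (Adams--Razar for $g=1$, van der Poorten--Tran in general, with a large part also in W.~Schmidt's Lemma~7), so there is no in-paper argument to measure yours against. That said, your sketch follows the standard proof in those cited sources --- complete quotients $\alpha_i=(P_i+\sqrt f)/Q_i$ and their reduced ideals, the divisor of $\Theta_i=A_{i-1}-B_{i-1}\sqrt f$, periodicity forcing the relevant $Q$ to become constant so that the divisor is supported at infinity, and minimality of the quasi-period converting ``$N$ divides'' into equality --- so the architecture is the right one.

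Two of your formulas would not survive the bookkeeping that you yourself identify as the main obstacle. First, the displayed divisor identity cannot be correct as written: $\mathfrak D_i-\mathfrak D_i^{\sigma}-\bigl(\sum_{j<i}\deg a_j\bigr)D_\infty$ never sees the pole order $g+1$ of $\sqrt f$ at each point at infinity, yet that is precisely where the $g+1$ in \eqref{e:relateNandDegrees} comes from; moreover the affine divisor cut out by the relevant $Q$ has degree $\deg Q\le g$, not $\sum_{j<i}\deg a_j$. The correct computation is $\mathrm{div}(A_{i-1}-B_{i-1}y)=\mathfrak D_i+(\deg A_{i-1}-\deg Q_i)\,P-(\deg A_{i-1})\,Q$, where $\mathfrak D_i$ is affine of degree $\deg Q_i$ (via the norm $A_{i-1}^2-fB_{i-1}^2=c\,Q_i$) and $\deg A_{i-1}=g+1+\sum_{j=1}^{i-1}\deg a_j$; when the quasi-period closes the affine part vanishes and $\deg A_{m-1}\cdot D_\infty$ is principal. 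Second, the degree bounds: the clean route, which is also what underlies Lemma~\ref{l:firstJ}, is the identity $\deg a_i=g+1-\deg Q_i$ together with $1\le\deg Q_i\le g$ strictly inside the quasi-period ($Q_i$ constant being exactly the signal that the quasi-period has closed). Your inequalities $\deg P_i\le g$ and $\deg Q_{i-1}+\deg Q_i\le 2g+2-\deg a_i$ are not the right normalization here: with $\deg f=2g+2$ the reduced $P_i$ has degree $g+1$, matching the polynomial part of $\sqrt f$. Neither defect is fatal --- both are repairable along the lines above --- but as written the infinite part of your divisor computation comes out off by exactly the $g+1$ the theorem needs.
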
 

For ease of reference,  we express the obvious corollary. 
\begin{Cor}\label{c:boundNnaive}  The torsion order $N$ of the divisor at infinity satisfies
\[g+m\le N \le  m g+1\,.\]
 \end{Cor}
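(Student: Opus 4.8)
The plan is to read the estimate straight off Theorem~\ref{t:quasiPeriodAndOrder}. That theorem supplies exactly the two ingredients needed: the identity \eqref{e:relateNandDegrees}, namely $N = g+1+\sum_{i=1}^{m-1}\deg a_i$, together with the uniform per-term bound $1 \le \deg a_i \le g$ valid for every index $1 \le i \le m-1$. Everything past that is bookkeeping, which is why the statement is flagged as an obvious corollary.

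Concretely, I would first sum the inequality $1 \le \deg a_i \le g$ over $i = 1, \dots, m-1$, obtaining
\[
  m - 1 \;\le\; \sum_{i=1}^{m-1} \deg a_i \;\le\; (m-1)\,g\,,
\]
and then substitute this two-sided bound into \eqref{e:relateNandDegrees}. The left-hand estimate yields $N \ge g+1+(m-1) = g+m$, while the right-hand estimate yields $N \le g+1+(m-1)g = mg+1$, which is precisely the asserted sandwich $g+m \le N \le mg+1$.

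The one convention worth a remark is the meaning of $m$ in the degenerate case: when the quasi-period length is $m=1$ the sum in \eqref{e:relateNandDegrees} is empty, so $N = g+1$, and both bounds $g+m$ and $mg+1$ collapse to $g+1$; the corollary then reduces to a (sharp) equality rather than a genuine interval. For $m \ge 2$ the displayed double inequality is strict as soon as some $\deg a_i$ is non-extremal. I do not expect any obstacle here: the entire content of the corollary is already packaged inside Theorem~\ref{t:quasiPeriodAndOrder} (which in turn rests on the periodicity normal form of Theorem~\ref{t:formPeriodic}), and the proof is the one-line substitution described above.
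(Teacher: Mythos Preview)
Your argument is correct and is precisely the intended one: the paper presents this result with no proof at all, merely labeling it ``the obvious corollary'' of Theorem~\ref{t:quasiPeriodAndOrder}, and your substitution of the per-term degree bounds into \eqref{e:relateNandDegrees} is exactly that obvious deduction.
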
 

In light of Lemma~\ref{l:newFamsCFexpansion}, the following is a direct application of Theorem~\ref{t:quasiPeriodAndOrder}. 
\begin{Lem}  Let $\mathcal G_u$ be as in Theorem~\ref{t:newFam}. 
Then the  divisor at infinity of the Jacobian of $\mathcal G_u$ is torsion of order 11.
\end{Lem}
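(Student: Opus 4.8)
The plan is to derive the claim directly from Theorem~\ref{t:quasiPeriodAndOrder}, applying it to the explicit continued-fraction expansion of $y=\sqrt{g_u(x)}$ recorded just above. First I would verify the hypotheses of that theorem for $\mathcal G_u$: the polynomial $g_u(x)$ has degree $6=2g+2$ with $g=2$ (an even genus, so the normal form \eqref{eq:hyperellFormula} is the relevant one), and its leading coefficient is $1$, which is a square in $\mathbb Q$; finally, for $u\in\mathbb Q\setminus\{0\}$ one checks that $g_u$ is not a perfect square in $\mathbb Q[x]$ --- this is automatic from the fact that the expansion exhibited above does not terminate, but can also be seen by a direct factorization or discriminant computation. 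Thus $\mathcal G_u$ is a genuine genus-two curve presented in the form required by Theorem~\ref{t:quasiPeriodAndOrder}.

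Next I would invoke the quasi-periodicity established in the preceding lemma: the expansion of $y$ has quasi-period length $m=7$ with skew value $\kappa=u$. By Theorem~\ref{t:formPeriodic} such an expansion is in fact periodic and of the shape \eqref{eq:basicForm}, which is precisely the hypothesis of Theorem~\ref{t:quasiPeriodAndOrder}. That theorem then identifies $D_\infty$ as torsion of order
\[
N \;=\; g+1+\sum_{i=1}^{m-1}\deg a_i \;=\; 2+1+\bigl(\deg a_1+\deg a_2+\deg a_3+\deg a_4+\deg a_5+\deg a_6\bigr).
\]
Reading the degrees off the list of partial quotients above, $\deg a_1=\deg a_6=2$ while $\deg a_2=\deg a_3=\deg a_4=\deg a_5=1$, so the sum equals $8$ and hence $N=3+8=11$. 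As a consistency check, the skew-symmetric and palindromic structure forces $\deg a_i=\deg a_{m-i}$, matching this tally; Corollary~\ref{c:boundNnaive} confines $N$ to the interval $[\,g+m,\,mg+1\,]=[9,15]$, into which $11$ duly falls; and each $\deg a_i\in\{1,2\}$ respects the bound $1\le\deg a_i\le g$ of Theorem~\ref{t:quasiPeriodAndOrder}.

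There is essentially no obstacle here: the substantive content --- the actual shape of the expansion and the values of $a_0,\dots,a_7$ --- is imported from the preceding lemma, and the present statement is a pure bookkeeping consequence of it. If any point in that import deserves care, it is the verification that the listed partial quotients really do close up into a quasi-period, i.e. that the complete quotient $\alpha_7$ is $\kappa^{\pm1}$ times a translate of $\alpha_0$ so that the skew-symmetric and palindromic continuation of the expansion is legitimate; granting that (as we may, it being part of the preceding lemma), the order computation above is immediate.
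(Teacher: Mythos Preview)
Your proof is correct and follows exactly the paper's approach: invoke the preceding lemma for the continued fraction data (quasi-period length $m=7$ with partial quotient degrees $2,1,1,1,1,2$) and apply Theorem~\ref{t:quasiPeriodAndOrder} to obtain $N=g+1+\sum_{i=1}^{m-1}\deg a_i=3+8=11$. The paper states only that this is ``a direct application of Theorem~\ref{t:quasiPeriodAndOrder}'' in light of the preceding lemma, so your additional verification of hypotheses and consistency checks is more thorough than the original, but the argument is the same.
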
 

\section{Genus two}   We show the usefulness of our continued fraction based approach mainly in the genus two setting.   We use  the Igusa invariants for distinguishing isomorphism classes in genus two. 
 
\subsection{Igusa invariants}

 The affine representation of $\mathcal C$ given in \eqref{eq:hyperellFormula} is unique up to a fractional linear transformation of $x$,  $x \mapsto (a x + b)/(c x + d)$ and the associated $y \mapsto e y/(c x + d)^{g+1}$, where $a,b,c,d \in k$, $ad-bc \neq 0$,  $e \in k^{*}$.  
   The invariants of Igusa \cite{Igusa60}, nearly those defined by Clebsch in 1872 (see \cite{Mestre1991}), uniquely identify isomorphism classes of curves of genus $g= 2$ by taking distinct values on the orbits of these fractional linear transformations. 
   
        Fix $f(x) = u_{0} x^{6} + u_{1} x^{5} + \cdots + u_{5} x + u_{6}$, of roots $\alpha_i \in \bar{k}$,  and write $(i j)$ for  by $\alpha_i - \alpha_j$.   Then,  as in say \cite{LauterYang}, we write  Igusa's invariants as
\[ j_1(\mathcal C)  = A^5/D, \;\;\;    j_2(\mathcal C)  = A^3B/D, \;\;\;  j_3(\mathcal C)  = A^4C/D\,,\]
where in Igusa's shorthand notation, 
\[ 
\begin{aligned} 
A &= u_{0}^2 \sum_{15}\, (12)^2(34)^2(56)^2\\
B &= u_{0}^4 \sum_{10}\, (12)^2(23)^2(31)^2 (45)^2(56)^2(46)^2\\
C &= u_{0}^6 \sum_{60}\, (12)^2(23)^2(31)^2 (45)^2(56)^2(46)^2(14)^2(25)^2(36)^2\\
D &= u_{0}^{10} \prod_{i<j}\, (ij)^2\,.
\end{aligned}
\]
Thus,  $D$ is the discriminant of $f(x)$, hence  for $\mathcal C$ to be non-singular, we must have  $D \neq 0$; the summands  given in the definition of the functions $A, B, C$ have subscripts indicating the number of distinct summands to be taken.   Since each of $A, B, C, D$ is symmetric in the roots of $f$,  each can be given in terms of the elementary symmetric polynomials of these six roots, and therefore in terms of the coefficients of $f(x)$.   Thus,  the invariants themselves are rational functions of these coefficients.   Their expressions are sufficiently complicated that we follow tradition and do not give them here.

\subsection{Known examples of torsion order eleven}\label{ss:KnownInG2} 
Recall that Flynn \cite{Flynn90, Flynn91} gave a family of genus two curves defined over $\mathbb Q$ whose Jacobians each has a torsion point defined over $\mathbb Q$ of order 11.  The family is given in terms of $t \in \mathbb Q$ by  $\mathscr F_t: y^2 = f_t(x)$, where 
\begin{equation}\label{e:flynn} f_t(x) = x^6 + 2 x^5 + (2 t + 3) x^4 + 2 x^3 + (t^2 + 1) x^2 + 2 t (1 - t) x +  t^2\,,
\end{equation}
for rational $t$.   With obvious notation, one finds that 

\[
\begin{aligned} A_t &= - 8 (3 - 16 t + 56 t^2 + 4 t^3) \\
B_t &= 4 (9 - 120 t + 1045 t^2 - 1120 t^3 + 539 t^4 + 448 t^5 + 16 t^6)\\
C_t &= -8 (27 - 492 t + 4328 t^2 - 21984 t^3 + 71544 t^4 - 115456 t^5 + 
   60168 t^6 + 29984 t^7 + 2688 t^8 + 64 t^9)\\
 D_t &=  
-4096 t^7 (9 - 104 t + 432 t^2 + 16 t^3)\,.
\end{aligned} \]
In particular,  as \cite{BernardLeprevostPohst09} similarly deduced,  since (in fact, each of) the invariants $ j_1(\mathcal C_t),  j_2(\mathcal C_t),  j_3(\mathcal C_t)$ are non-constant, the Flynn family does indeed include infinitely many non-isomorphic curves.

As mentioned above, Bernard, Lepr\'evost and Pohst \cite{BernardLeprevostPohst09} gave some 18  other curves over $\mathbb Q$ with divisor of infinity of torsion order 11.  

\subsection{New family}\label{ss:NewInG2} The family of genus 2 curves $\mathcal G_u$ shares no isomorphism class with any of the curves of Flynn's family, $\mathscr F_t$.   This is verified by computation involving the Igusa invariants;   the only values of $(t,u)$ such that  $(\,j_1(t), j_2(t), j_3(t)\,) = (\, j_1(u), j_2(u), j_3(u)\,)$ (where we mildly abuse notation for the sake of legibility) are trivial in the sense that the invariants are then zero or infinity.    We now sketch this computation.

  Denoting a numerator of a quotient by $\text{Numer}$,  let  $\text{res}_{12}(t)$ denote  the polynomial in $t$ given by taking the resultant with respect to the variable $u$  of $\text{Numer}(\,j_1(t)-j_1(u)\,)$ with $\text{Numer}(\,j_2(t)-j_2(u)\,)$.   Of course,  the only values of $(t,u)$ such that $(\,j_1(t), j_2(t))\,) = (\, j_1(u), j_2(u)\,)$ are with $t$ a zero of this resultant polynomial.     Similarly define $\text{res}_{13}(t)$.   These two resultant polynomials both divisible by certain powers of $t$,  $9 - 104 t + 432 t^2 + 
   16 t^3$ and $3 - 16 t + 56 t^2 + 4 t^3$.    The first two of these divides $D_t$, the third $A_t$, see the display under  \eqref{e:flynn},  and therefore the vanishing of any of them corresponds to a degenerate case.     

    The polynomial GCD of $\text{res}_{12}(t)$ and $\text{res}_{13}(t)$,  after dividing by the aforementioned powers of the  innocuous factors, is a nonzero constant.      Therefore,  the resultants have no common nontrivial zero and   there  is indeed no  non-trivial occurrence of $(\,j_1(t),\, j_2(t), \,j_3(t)\,) = (\, j_1(u),\, j_2(u), \, j_3(u)\,)$.
    
     Involved in the above is the fact that the Igusa invariants for the family $\mathcal G_u$ are non-constant, and thus $\mathcal G_u$ is indeed an infinite family of non-isomorphic genus two curves.

\section{Bounding torsion order and partial quotient degree}\label{s:improvedBound}
   The naive upper bound on the order of the divisor at infinity is almost never achieved.    

\begin{Thm}\label{t:newBound}    The torsion order $N$ of the divisor at infinity of $\mathcal C$ satisfies  
\[
N < 1 + m g\]
whenever   $g >1$  and $m>2$. 
 \end{Thm}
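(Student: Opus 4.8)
The plan is to pass from the continued fraction expansion of $\sqrt{f(x)}$ to its complete quotients, and to translate the claimed strict inequality into the assertion that the quasi-period cannot consist entirely of partial quotients of the maximal degree $g$. By Theorem~\ref{t:quasiPeriodAndOrder} we have $N = g+1 + \sum_{i=1}^{m-1}\deg a_i$ with $1 \le \deg a_i \le g$, so $N \le 1+mg$ always, with equality precisely when $\deg a_i = g$ for every $1 \le i \le m-1$. It therefore suffices, under the hypotheses $g>1$ and $m>2$, to produce a single index $i$ with $\deg a_i \le g-1$.

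Next I would record the standard bookkeeping. Writing the $i$-th complete quotient as $\alpha_i = (\sqrt f + P_i)/Q_i$ with $P_i,Q_i \in k[x]$, one has $P_1 = a_0$, $Q_1 = f - a_0^{\,2}$, the recursions $P_{i+1} = a_i Q_i - P_i$ and $Q_{i+1} = (f - P_{i+1}^{\,2})/Q_i$, the identity $f - P_i^{\,2} = Q_{i-1}Q_i$ (with $Q_0 = 1$), and $a_i = \lfloor\, \alpha_i\, \rfloor$. Since $a_0 = \lfloor\, \sqrt f\, \rfloor$ one gets $\deg Q_1 \le g$, and each $\alpha_i$ with $i \ge 1$ is reduced, in the sense that its conjugate $\bar\alpha_i = (P_i - \sqrt f)/Q_i$ has negative degree: this holds for $i=1$ and propagates through $\bar\alpha_{i+1} = (\bar\alpha_i - a_i)^{-1}$ together with $\deg a_i \ge 1$, which at the same time gives $\deg\bar\alpha_{i+1} = -\deg a_i$. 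Combining $\alpha_i\bar\alpha_i = -Q_{i-1}/Q_i$ with $\deg\alpha_i = \deg a_i$ then forces $\deg a_i + \deg Q_i$ to be independent of $i \ge 1$, and evaluating at $i=1$, where $\deg(\sqrt f + a_0) = g+1$ because $k$ has characteristic zero, yields
\[
\deg a_i + \deg Q_i = g+1 \qquad (i \ge 1).
\]
In particular $\deg a_i \le g$ is equivalent to $\deg Q_i \ge 1$, and the equality case $N = 1+mg$ is exactly $\deg Q_i = 1$ for all $1 \le i \le m-1$.

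The heart of the argument is to rule out that equality case. Suppose $\deg Q_i = 1$ for all $1 \le i \le m-1$. Then $\deg a_1 = g$, hence $\deg\bar\alpha_2 = -\deg a_1 = -g$, so $\sqrt f - P_2 = -Q_2\bar\alpha_2$ has degree $1-g$, which is negative since $g>1$; as $P_2$ is a polynomial this forces $P_2 = \lfloor\, \sqrt f\, \rfloor = a_0 = P_1$. But then
\[
Q_2 = \frac{f - P_2^{\,2}}{Q_1} = \frac{f - a_0^{\,2}}{Q_1} = \frac{Q_1}{Q_1} = 1,
\]
contradicting $\deg Q_2 = 1$ — and $Q_2$ genuinely lies in the range $1 \le i \le m-1$ because $m>2$. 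Hence some $\deg a_i \le g-1$, and therefore $N \le mg < 1+mg$.

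I do not anticipate a deep obstacle; the step that most needs care is the second one, namely establishing the reducedness of the $\alpha_i$ for $i \ge 1$ and the constancy of $\deg a_i + \deg Q_i$, together with checking that all the leading-term cancellations behave as claimed in characteristic zero (in particular that $\sqrt f + a_0$ really has degree $g+1$, and that $f - P_i^{\,2}$ and $f - a_0^{\,2}$ drop in degree as used). Once that identity is in hand the contradiction in the third paragraph is immediate, and the two hypotheses enter it transparently: $g>1$ is what makes $1-g$ negative, forcing $P_2 = a_0$, while $m>2$ is what keeps $Q_2$ inside the quasi-period, where the bound $\deg a_i \le g$ of Theorem~\ref{t:quasiPeriodAndOrder} is available.
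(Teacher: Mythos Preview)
Your proof is correct and takes a genuinely different, more direct route than the paper. The paper derives Theorem~\ref{t:newBound} as a consequence of the $h_j$ machinery (Definition~\ref{def:Hseq}, Proposition~\ref{p:formHj}): from the dichotomy that either $\delta_{j-1}+\delta_j = g+1+\delta_1$ or $\delta_{j-1}+\delta_j \le g+1$, the all-$g$ partition gives $\delta_1+\delta_2 = 2g$, which for $g>1$ satisfies neither alternative. You instead work with the complete quotients $(P_i,Q_i)$ and the identity $\deg a_i + \deg Q_i = g+1$, and exclude $\deg a_1 = \deg a_2 = g$ by showing it forces $P_2 = a_0$ and hence $Q_2 = 1$. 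Your argument is shorter and self-contained for this particular statement, needing no auxiliary sequences beyond the classical $P_i,Q_i$; the paper's approach, on the other hand, is building toward the stronger Theorem~\ref{t:consecDegs} and the sharper bound $N < (\tfrac{5m}{6}+1)g + m/2$, for which the $h_j$ and their vanishing pattern are the natural tool. It is worth noting that the contradiction you reach is in fact slightly stronger than you state: $Q_2 = 1$ already contradicts $\deg a_2 \le g$ from Theorem~\ref{t:quasiPeriodAndOrder}, independently of the assumption $\deg a_2 = g$.
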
 
In fact,  from Theorem~\ref{t:consecDegs}  below, it follows that $N< (\frac{5m}{6}+1) g + m/2$.   
Furthermore,  our results give clear restrictions on the ``search space" when we set out to determine the curves of a fixed genus with divisors at infinity of fixed order.  

\subsection{Restrictions on partial quotients: statements}

Further restrictions then simply an upper bound of degree $g$ are imposed on the degrees of the partial quotients  when one  fixes genus $g$, that is when $\deg(a_0) = g+1$.     In this subsection we list several results, with proofs delayed until the following subsection.
\begin{Thm}\label{t:consecDegs}    Let $m$ be the quasi-period length of the continued fraction of $y$ for the hyperelliptic curve $\mathcal{C}$ over a field $k$.    Then for any $1 \le j \le m$, 
\begin{equation}\label{eq:topValue}  \deg(a_{j-1}) + \deg( a_j)\le  g+1 + \deg(a_1)\,.\end{equation}
Equality in \eqref{eq:topValue}  holds when $j=m$, and never holds for any three consecutive values of $j$ with $j \le m/2$.  Furthermore, 
whenever equality fails, then 
\begin{equation}  \deg(a_{j-1}) + \deg( a_j)\le  g+1\,.\end{equation}
 \end{Thm}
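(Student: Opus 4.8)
The strategy is to translate the statement into the classical bookkeeping for $\sqrt f$ via ``reduced quadratic irrationals'' and to read off everything from degree counts. Write the complete quotients as $\alpha_i=(P_i+\sqrt f)/Q_i$ with $P_i,Q_i\in k[x]$, $P_0=0$, $Q_0=1$, $P_{i+1}=a_iQ_i-P_i$, $Q_{i-1}Q_i=f-P_i^2$; for $i\ge1$ each $\alpha_i$ is reduced, so $\deg(P_i-\sqrt f)<\deg Q_i$, whence (using that $\sqrt f-a_0$ has negative degree) $\deg P_i=g+1$, $\deg(P_i-a_0)<\deg Q_i\le g$, and $\deg a_i=(g+1)-\deg Q_i$; moreover $\deg Q_i\ge1$ exactly for $1\le i\le m-1$, while $\deg Q_m=0$. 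These are the standard facts underlying Theorems~\ref{t:formPeriodic} and~\ref{t:quasiPeriodAndOrder}. Summing the degree identity over two consecutive indices and using $Q_{j-1}Q_j=f-P_j^2$ gives, for $1\le j\le m$,
\[ \deg a_{j-1}+\deg a_j \;=\; 2(g+1)-\deg\bigl(f-P_j^2\bigr). \]

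I would then establish the key ``gap'' estimate. With $r_j:=P_j-a_0$ (so $\deg r_j<\deg Q_j\le g$, and $r_j=0$ precisely when $P_j=a_0$), one has
\[ f-P_j^2 \;=\; Q_1-2a_0r_j-r_j^2, \]
where $\deg Q_1\le g$, $\deg(2a_0r_j)=(g+1)+\deg r_j$ for $r_j\ne0$, and $\deg r_j^2<(g+1)+\deg r_j$; hence $\deg(f-P_j^2)$ equals $\deg Q_1=(g+1)-\deg a_1$ when $P_j=a_0$ and is at least $g+1$ otherwise. Feeding this back into the displayed identity yields at once \eqref{eq:topValue}, the fact that equality there holds exactly when $P_j=a_0$, and — whenever $P_j\ne a_0$ — the sharpened bound $\deg a_{j-1}+\deg a_j\le g+1$. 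The index $j=m$ is an equality case, since $\deg Q_m=0$ and the reducedness of $\alpha_m$ force $\deg(P_m-\sqrt f)<0$, i.e.\ $P_m=a_0$ (equivalently: from \eqref{eq:basicForm} one has $\deg a_m=g+1$, and the skew-symmetric property gives $\deg a_{m-1}=\deg a_1$).

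For the last assertion, suppose equality in \eqref{eq:topValue} held at three consecutive indices $j_0,j_0+1,j_0+2$, all $\le m/2$. By the criterion just obtained, $P_{j_0}=P_{j_0+1}=P_{j_0+2}=a_0$; then $Q_{j_0}Q_{j_0+1}=f-P_{j_0+1}^2=Q_1=Q_{j_0+1}Q_{j_0+2}$, so $Q_{j_0}=Q_{j_0+2}$ and therefore $\alpha_{j_0}=(P_{j_0}+\sqrt f)/Q_{j_0}=\alpha_{j_0+2}$. But $(\alpha_i)_{i\ge1}$ is purely periodic and its period has length at least $m$, so the complete quotients in one period are pairwise distinct; since $1\le j_0<j_0+2\le m/2$, the indices $j_0$ and $j_0+2$ lie in a single period, and $\alpha_{j_0}=\alpha_{j_0+2}$ is impossible. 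The hypothesis $j\le m/2$ is genuinely needed: near the midpoint, \eqref{eq:basicForm} forces equality at $j=m$ and $j=m+1$ simultaneously, and it may hold at $j=m-1$ and $j=m+2$ as well.

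The degree bookkeeping and the gap estimate are routine, and between them they dispose of \eqref{eq:topValue}, the equality criterion, and the sharpened bound almost formally. I expect the real work to be twofold: first, pinning down the reduced-quadratic-irrational normal form cleanly enough to have $\deg a_i=(g+1)-\deg Q_i$ and $\deg(P_i-a_0)<\deg Q_i$ uniformly for $i\ge1$, including the behaviour of $\deg Q_i$ across the quasi-period and the twist by $\kappa$ when the period strictly exceeds the quasi-period; and second, the ``no three consecutive equalities'' step, which rests on the distinctness of complete quotients within a single period — precisely the point where the restriction $j\le m/2$ is used.
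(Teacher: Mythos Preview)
Your argument is correct and is genuinely different from the paper's. The paper does not touch the classical $(P_i,Q_i)$ formalism at all; instead it introduces an auxiliary sequence of rational functions $h_j$ built from the convergents $p_j,q_j$ of $1/(\sqrt f-a_0)$, proves by induction that $\deg h_j=\delta_0-\delta_{j-1}-\delta_j$ with $h_j=0$ exactly in the equality case of \eqref{eq:topValue} (their Proposition~\ref{p:formHj} and Lemma~\ref{l:hSeqDegrees}), and then rules out three consecutive vanishings $h_{j-2}=h_{j-1}=h_j=0$ via a second expression for $h_j$ and a somewhat delicate cancellation argument in the $p_i,q_i$ (their Lemma~\ref{l:noThreeInRow}). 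Your route replaces all of that machinery with the single identity $\deg a_{j-1}+\deg a_j=2(g+1)-\deg(f-P_j^2)$ and the transparent criterion ``equality $\Leftrightarrow P_j=a_0$'', after which the three-in-a-row case collapses to $\alpha_{j_0}=\alpha_{j_0+2}$ and a period-length contradiction. This is more elementary, avoids the inductive lemmas entirely, and in fact yields a bit more than the stated $j\le m/2$ hypothesis requires. What the paper's approach buys is that the $h_j$ are exactly the objects used downstream in their computational method (Section~\ref{s:NaiveMethod}, step~(11) solves $h_1$ explicitly), and they record separately that each $h_j$ is in fact a polynomial; so for them the $h_j$ are not just a proof device but the working variables of the search algorithm. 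Your closing remark about equalities at $j=m+1,m+2$ is slightly off target since the theorem only ranges over $1\le j\le m$, but this does not affect the argument.
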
 

\bigskip
In fact, we expect that equality in \eqref{eq:topValue} never holds for 
two consecutive values of $j$ with $j\le m/2$.   However, this entails a long and tedious calculation, whereas we are able to prove the above statement in a fairly straightforward manner.  Furthermore,  this already provides a helpful improvement in the upper bound of the order of the divisor at infinity.      The proof is given in terms of a certain sequence of rational functions, $h_j$, see below.   (That the $h_j$ are in fact polynomials is show in Daowsud's Ph.D. thesis \cite{Daowsud}.)    Focusing on leading coefficients of the $a_i$,  we were lead to the $h_j$.    The first, $h_1$, comes from the following straightforward variant of Friesen's result.

The proof of the following lemma is purely algebraic; the result thus holds for example in the case of regular continued fraction expansions of square roots.    We use the notation $lc(p)$ to denote the leading coefficient of a polynomial $p(x)$.

\begin{Lem}\label{l:firstJ}  Suppose that $\sqrt{f(x)}$ has a continued fraction expansion as in \eqref{eq:basicForm}, with quasi-period length $m$, value $\kappa$, and period length $n$.    Let $p_j/q_j$ denote the approximants to the purely periodic 
$1/(\sqrt{f(x)}-a_0)$.   Then 
\[
  f(x) - a_{0}^{2} = q_{n-1}/p_{n-2} = q_{m-1}/( \kappa \,p_{m-2})\,.
\]   
In particular,  $  f(x) - a_{0}^{2} \in k[x]$ is of degree $g+1 - \deg(a_1)$ and has leading coefficient   $2/\text{lc}(a_1)$.                    
\end{Lem}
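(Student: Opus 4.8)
The plan is to exploit the basic recursion for continued fractions together with the symmetry encoded in the periodic form \eqref{eq:basicForm}. Write $\beta = 1/(\sqrt{f(x)} - a_0)$, which by the theory recalled above is purely periodic with (quasi-)period $(a_1, \dots, a_{m-1}, 2\kappa a_0, a_{m-1}, \dots, a_1, 2 a_0)$ in the case $\kappa \neq 1$, and period $(a_1,\dots,a_{m-1},2a_0)$ when $\kappa=1$. First I would record the standard facts: the convergents $p_j/q_j$ of $\beta$ satisfy $\beta = (p_{n-1}\beta + p_{n-2})/(q_{n-1}\beta + q_{n-2})$ for the period length $n$, and likewise a ``quasi'' version at index $m-1$ coming from the repetition of the truncated block. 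Also $p_{-1}=1, p_{-2}=0$, $q_{-1}=0, q_{-2}=1$, and the matrix/determinant identity $p_{j}q_{j-1} - p_{j-1}q_{j} = (-1)^{j+1}$.

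\smallskip
The key computation is to turn the purely-periodic functional equation into an algebraic identity for $\beta$. Since $\beta$ is a root of $q_{n-1}\beta^2 + (q_{n-2} - p_{n-1})\beta - p_{n-2} = 0$ and also $\beta = 1/(\sqrt{f}-a_0)$, i.e. $\sqrt{f} = a_0 + 1/\beta$, one gets $f(x) = a_0^2 + 2a_0/\beta + 1/\beta^2$, so $f - a_0^2 = (2a_0\beta + 1)/\beta^2$. Substituting the quadratic relation for $\beta$ and using that $\sqrt f$ has trace zero (its conjugate is $-\sqrt f$, hence $1/\beta$ and $-2a_0 - 1/\beta$ are the two roots, forcing $q_{n-2} = -p_{n-1}$ after matching with the quadratic), I would extract $f - a_0^2 = q_{n-1}/p_{n-2}$. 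The quasi-period statement $f - a_0^2 = q_{m-1}/(\kappa p_{m-2})$ follows the same way, but starting from the truncated block relation $\beta = (\kappa p_{m-1}\beta + p_{m-2})/(q_{m-1}\beta + \kappa q_{m-2})$ — the skew value $\kappa$ enters precisely through the factor $2\kappa a_0$ in the middle partial quotient, which rescales the ``$p$'' row of the product matrix by $\kappa$. Matching trace-zero again pins down the cross terms and yields the stated formula.

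\smallskip
For the final sentence, I would argue by degrees. From $f - a_0^2 = q_{m-1}/(\kappa p_{m-2})$, note $p_{m-2}$ and $q_{m-1}$ are both polynomials (product of the partial quotient matrices), with $\deg q_{m-1} = \sum_{i=1}^{m-1}\deg a_i$ and $\deg p_{m-2} = \sum_{i=1}^{m-1}\deg a_i - \deg a_1$ (the $p$-sequence ``lags'' the $q$-sequence by the first partial quotient, since $p_j$ omits $a_0$ while $q_j$ starts the product at $a_1$). Hence $\deg(f - a_0^2) = \deg a_1$... wait — rather, since $\deg f = 2g+2$ and $\deg a_0 = g+1$ with $\mathrm{lc}(a_0)^2 = \mathrm{lc}(f) = u_0$, the leading terms of $f$ and $a_0^2$ cancel, so $\deg(f-a_0^2) \le 2g+1$; combining with the convergent-degree count gives $\deg(f - a_0^2) = (g+1) - \deg a_1$. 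For the leading coefficient: write the first step $\alpha_1 = 1/(\sqrt f - a_0)$, so $\sqrt f - a_0 = 1/\alpha_1$ has a Laurent expansion whose top term is $1/(\mathrm{lc}(a_1) x^{\deg a_1})$; then $f - a_0^2 = (\sqrt f - a_0)(\sqrt f + a_0)$, and $\sqrt f + a_0$ has top term $2\,\mathrm{lc}(a_0) x^{g+1}$, so the product has leading coefficient $2\,\mathrm{lc}(a_0)/\mathrm{lc}(a_1)$ — so I should double check whether the normalization in the statement has absorbed $\mathrm{lc}(a_0)$ into the hypothesis $u_0$ a square; tracking this constant carefully is the one place to be careful.

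\smallskip
The main obstacle I anticipate is the bookkeeping in the quasi-period case: correctly identifying where the factor $\kappa$ sits in the matrix product for the truncated block $(a_1,\dots,a_{m-1},2\kappa a_0)$ versus the full period, and verifying that the ``trace-zero'' condition on $\sqrt f$ forces the anti-diagonal symmetry $q_{n-2} = -p_{n-1}$ (and its quasi-analogue) so that the quadratic in $\beta$ collapses to exactly the ratio claimed. Once that symmetry is in hand, everything else is a short degree-and-leading-coefficient count.
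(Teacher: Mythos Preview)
The paper does not actually supply a proof of this lemma; it merely remarks that the argument is ``purely algebraic'' and a ``straightforward variant of Friesen's result.'' So your proposal cannot be compared line-by-line against the paper, but your overall route via the quadratic satisfied by the purely periodic $\beta=1/(\sqrt{f}-a_0)$ is the standard one and is essentially correct. There is, however, a concrete slip to fix.

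Your trace computation is wrong: from the quadratic $q_{n-1}\beta^2+(q_{n-2}-p_{n-1})\beta-p_{n-2}=0$, the sum of the two conjugate values of $1/\beta$ equals $(q_{n-2}-p_{n-1})/p_{n-2}$, and since these conjugates are $\sqrt{f}-a_0$ and $-\sqrt{f}-a_0$, the sum is $-2a_0$. This gives $p_{n-1}-q_{n-2}=2a_0\,p_{n-2}$, \emph{not} $q_{n-2}=-p_{n-1}$. With the correct relation your substitution does go through: $f-a_0^2=(2a_0\beta+1)/\beta^2$ together with $q_{n-1}\beta^2=(p_{n-1}-q_{n-2})\beta+p_{n-2}=p_{n-2}(2a_0\beta+1)$ immediately yields $f-a_0^2=q_{n-1}/p_{n-2}$. (Even more directly, the \emph{product} of the roots gives $\beta\bar\beta=-1/(f-a_0^2)=-p_{n-2}/q_{n-1}$, and you are done without ever needing the trace.) Your stated degree formulas $\deg q_{m-1}=\sum_{i=1}^{m-1}\delta_i$ and $\deg p_{m-2}=\sum_{i=1}^{m-1}\delta_i-\delta_1$ are also off by one shift in the index range (the correct ones are $\deg p_{m-2}=\sum_{i=1}^{m-1}\delta_i$ and $\deg q_{m-1}=\sum_{i=2}^{m}\delta_i$, whose difference is $\delta_m-\delta_1=(g+1)-\deg a_1$); fortunately your alternate argument via the factorisation $f-a_0^2=(\sqrt{f}-a_0)(\sqrt{f}+a_0)$ is clean and gives both the degree and the leading coefficient $2\,\mathrm{lc}(a_0)/\mathrm{lc}(a_1)$, which agrees with the paper's $2/\mathrm{lc}(a_1)$ once one notes that in practice $a_0$ is taken monic (cf.\ Step~(7) of the Naive Method). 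For the quasi-period identity, your matrix relation needs to be checked carefully---the placement of the $\kappa$'s comes from the skew symmetry of $(a_1,\dots,a_{m-1})$ together with $a_m=2\kappa a_0$, and the cleanest derivation is to first obtain $f-a_0^2=q_{n-1}/p_{n-2}$ with $n=2m$ and then use the palindromic structure of the full period to reduce to $q_{m-1}/(\kappa\,p_{m-2})$.
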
 
We remark that  setting $p_{-2} = 0$, $p_{-1}= 1$,  $q_{-2} = 1$, $q_{-1} = 0$ gives
\begin{align*}
p_0&=a_1,\ \ &q_0&=1,\\
p_1&=a_2a_1+1, \ \ &q_1&=a_2;\\
\text{and for}\,j\ge 2:  p_j&=a_{j+1}p_{j-1}+p_{j-2}, \ \ &q_j&=a_{j+1}q_{j-1}+q_{j-2}.
\end{align*}\smallskip

 Recall that $c_j$ is related to the skew value by $c_j = \kappa^{\pm 1}$, depending upon the parity of $j$.
\begin{Def}\label{def:Hseq}  For $j \le m$, define the following rational functions:
\begin{equation}\label{e:defHsubJ}
\begin{aligned}
h_1&=\dfrac{c_1 \,q_{m-1}}{p_{m-2}}\,,\\
h_2&=\dfrac{c_2 p_{m-4} h_1 - q_{m-3}}{p_{m-3}}\,,\; \text{and}\\
h_j&=\dfrac{c_j (p_{j-3} h_{j-1} + q_{j-3}) p_{m-(j+2)} - p_{j-4}q_{m-(j+1)}}{p_{m-(j+1)} p_{j-4}}\,,
\end{aligned} 
\end{equation} 
for  $3\le j \le m$. 
\end{Def} 

The $h_j$  enjoy a palindromic symmetry, which in particular accounts for the hypothesis that $j\le m/2$ in our statements about consecutive vanishing values. Since we do not use this aspect here, we merely state that :  For $m-1\ge j\ge 2$,  $h_{m+1-j} = h_j$.

\bigskip
 
Note that the $h_j$ are certainly rational functions,  and thus each has a degree defined as usual as the difference between the degrees of numerator and denominator.

\begin{Prop}\label{p:formHj} For $1\le j \le m$,  $h_j$ has non-negative degree.  If $h_j$ is non-zero, then: 
\begin{align*} \deg{h_1} &= g+1 - \deg(a_1)\,;\\
\deg{h_j} &= g+1 - \deg(a_{j-1}) - \deg(a_j)\,\;\;\mbox{when} \; j>1\,.
\end{align*}
Furthermore,  for $j>1$ we have that $h_j = 0$ if and only if  $g+1 + \deg(a_1) = \deg(a_{j-1}) + \deg(a_j)$; in particular, $h_m = 0$.   Finally,  
$h_2 = 0$ if and only if $m\le 2$.
\end{Prop}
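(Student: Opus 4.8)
The plan is to induct on $j$, carrying along not only $\deg h_j$ but the leading coefficient of $h_j$ as a rational function of $x$. Write $d_i=\deg a_i$, so that $d_0=d_m=g+1$, while $1\le d_i\le g$ for $1\le i\le m-1$ by Theorem~\ref{t:quasiPeriodAndOrder}, and the skew/palindromic symmetry of \eqref{eq:basicForm} gives $d_{m-i}=d_i$ for every $i$. Since every partial quotient occurring in a quasi-period has positive degree, the recursions for the convergents $p_i/q_i$ of $\beta:=1/(\sqrt{f}-a_0)$ involve no cancellation of leading terms, so $\deg p_i=\sum_{r=1}^{i+1}d_r$ and $\deg q_i=\sum_{r=2}^{i+1}d_r$, with explicit leading coefficients. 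For the base case $j=1$, note $c_1=\kappa^{-1}$, so Lemma~\ref{l:firstJ} gives
\[
 h_1=c_1\,\frac{q_{m-1}}{p_{m-2}}=\kappa^{-1}\!\cdot\kappa\,(f-a_0^2)=f-a_0^2,
\]
a nonzero polynomial of degree $g+1-d_1\ge 1$ with leading coefficient $2/\text{lc}(a_1)$.

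For $j\ge 2$ I would substitute this degree data into the appropriate case of \eqref{e:defHsubJ}. Using the inductive value $\deg h_{j-1}=g+1-d_{j-2}-d_{j-1}$ (the subcase $h_{j-1}=0$ being handled analogously), a short computation shows that the two terms in the numerator of $h_j$ have degrees differing by exactly $(g+1+d_1)-(d_{j-1}+d_j)$, where $d_{m-j}=d_j$ has been used. Hence if $d_{j-1}+d_j<g+1+d_1$ the first term strictly dominates, there is no cancellation, and after dividing by the denominator one reads off $\deg h_j=g+1-d_{j-1}-d_j$ exactly. What remains is to show that the only alternative is $d_{j-1}+d_j=g+1+d_1$ together with $h_j=0$, and that the value $g+1-d_{j-1}-d_j$ just obtained is always $\ge 0$.

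Both points are matters of leading-coefficient bookkeeping, and for this it seems cleanest to bring in the complete-quotient data $\alpha_i=(\sqrt{f}+P_i')/Q_i'$ of the expansion, for which $P_i'+P_{i-1}'=a_{i-1}Q_{i-1}'$, $Q_{i-1}'Q_i'=f-(P_i')^2$, $\deg Q_i'=g+1-d_i$, and, crucially, $\deg Q_i'\ge 1$ for $1\le i\le m-1$ since then $d_i\le g$. Comparing top terms shows $\text{lc}(P_i')=\text{lc}(a_0)$ and controls $\deg(P_i'-a_0)$; since $\deg Q_i'\ge 1$ forces $P_i'-a_0\ne 0$ for $1\le i\le m-1$, one gets $d_{i-1}+d_i\le g+1$ throughout that range, and, via $Q_{i-1}'Q_i'=f-(P_i')^2$, one identifies $h_j$ (up to a nonzero constant) with $f-(P_j')^2$ divided by a polynomial of degree $g+1$, so that $\deg h_j=\deg Q_{j-1}'+\deg Q_j'-(g+1)=g+1-d_{j-1}-d_j\ge 0$, the numerator collapsing precisely when $\deg Q_{j-1}'+\deg Q_j'<g+1$, i.e.\ when $d_{j-1}+d_j=g+1+d_1$; tracking $\text{lc}(h_j)$ through the recursion confirms the collapse is total. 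The palindromic symmetry $h_{m+1-j}=h_j$ lets one run the induction only for $j\le m/2$ and reflect.

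The two boundary cases then follow from the vanishing criterion just obtained. For $j=m$: $d_{m-1}=d_1$ by symmetry and $d_m=g+1$, so $d_{m-1}+d_m=g+1+d_1$ and $h_m=0$. For $j=2$: the criterion reads $h_2=0$ iff $d_2=g+1$; when $m\ge 3$ the partial quotient $a_2$ lies strictly inside the quasi-period so $d_2\le g$ and $h_2\ne 0$, whereas when $m=2$ one has $a_2=a_m=2\kappa a_0$ of degree $g+1$ and $h_2=0$ --- hence $h_2=0$ exactly when $m\le 2$. The step I expect to be the real obstacle is precisely the leading-coefficient analysis of the numerator of \eqref{e:defHsubJ}: showing that its two terms always share the same top degree, that their leading coefficients then necessarily agree, and that this forces the whole numerator to vanish rather than merely drop in degree. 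I anticipate this is tedious rather than conceptually hard, and it does not require polynomiality of the $h_j$ (established in \cite{Daowsud}), only the degree of $h_j$ as a rational function.
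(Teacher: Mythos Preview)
Your inductive skeleton---base case $j=1$ via Lemma~\ref{l:firstJ}, then stepping through \eqref{e:defHsubJ} by comparing the degrees of the two numerator summands, then the boundary cases $j=m$ and $j=2$---is essentially the paper's argument, split there across Lemma~\ref{l:firstJ}, Lemma~\ref{l:hSeqDegrees}, and Lemma~\ref{l:hTwoVanishes}.  Your degree computation in the inductive step and your handling of the two boundary assertions are correct and match the paper.

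Where you diverge is the detour through the complete-quotient data $(P_i',Q_i')$, and that part contains genuine errors.  First, $P_1'=a_0$ always (since $P_1'=a_0Q_0'-P_0'=a_0$), so the claim $P_i'-a_0\ne 0$ for $1\le i\le m-1$ already fails at $i=1$.  Second, and more seriously, the inequality $d_{i-1}+d_i\le g+1$ does \emph{not} hold throughout $1\le i\le m-1$: even for interior indices, Theorem~\ref{t:consecDegs} explicitly allows $d_{j-1}+d_j=g+1+d_1$ (just not for three consecutive $j$), and this is precisely the case $h_j=0$.  So you cannot use $d_{i-1}+d_i\le g+1$ as an \emph{input} to prove non-negativity; it is a consequence of the dichotomy you are trying to establish, not an independent fact.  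Third, the identification of $h_j$ (up to a nonzero constant) with $(f-(P_j')^2)$ divided by a degree-$(g+1)$ polynomial is asserted but never justified; matching degrees is not the same as exhibiting the relation, and you would need to actually connect the recursion \eqref{e:defHsubJ} to the $(P',Q')$ recursion.

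You are right, though, that the crux is the leading-coefficient analysis: when the two numerator terms in \eqref{e:defHsubJ} have equal top degree, one must show the entire numerator vanishes rather than merely dropping in degree.  The paper's proof of Lemma~\ref{l:hSeqDegrees} treats the cases $h_j\ne 0$ and $h_j=0$ separately and derives the degree formula and the vanishing criterion in each, but, like your sketch, does not spell out why partial cancellation (yielding a nonzero $h_j$ of negative degree) is impossible; that finer point is effectively deferred to \cite{Daowsud}.  So your identification of the obstacle is accurate, but the $(P_i',Q_i')$ route you propose to get around it does not work as written.
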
 

\bigskip

\bigskip

   As an easy application, consider the ``greedy" situation (of partitioning $N-g-1$, see Step (2) of the Method of Section~\ref{s:NaiveMethod}) where we set $\delta_j = g$.  
We note that since $\delta_0+\delta_1 = 2g+1$ is never equal to $2g$ and  $\delta_0  \ge 2g$ can only be satisfied when $g=1$,   it follows that if $g>1$ and $m>2$ then the equality $\delta_1=g$ implies that any $\delta_{j-1}+\delta_j \le g+1$ and in particular we cannot have consecutive partial quotients of degree $g$.    This implies that 
the naive upper bound of Corollary~\ref{c:boundNnaive}  is almost never achieved, that is  Theorem~\ref{t:newBound} holds.
 
\subsection{Restrictions on partial quotients: proofs} 

\bigskip 
Theorem~\ref{t:newBound} directly follows from Theorem~\ref{t:consecDegs}.   The proof of Theorem~\ref{t:consecDegs} follows directly from considering Equation \eqref{e:relateNandDegrees}  with Proposition~\ref{p:formHj} and Lemma~\ref{l:noThreeInRow} (below).     The following three lemmas yield the result of Proposition~\ref{p:formHj}.

\begin{Def}
For ease of expression,  let us define for each $0\le j \le m$, 
\[ \delta_j = \deg(a_j)\,.\]
\end{Def}
Thus $\delta_0 = g+1$, $\delta_{m-j} = \delta_j$.  Also, we have 
$\deg(p_j) - \deg(q_j) =   \delta_1$ and  $\deg(p_j) - \deg(p_{j-1}) = \delta_{j+1}$ (and similarly for consecutively indexed $q_j$).

\bigskip

\begin{Lem}\label{l:hSeqDegrees} For $2\le j \le m$,  if $h_j$ is non-zero, then: 
\[\deg{h_j} = \delta_0- \delta_{j-1} - \delta_j \,.\]

Furthermore,  $h_j = 0$ if and only if  $\delta_0 + \delta_1 = \delta_{j-1} + \delta_j$; in particular, $h_m = 0$.   
 \end{Lem}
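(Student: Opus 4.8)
The plan is to proceed by a downward/upward induction on $j$, tracking the degrees of numerator and denominator in each defining formula of Definition~\ref{def:Hseq}, and using the standard degree bookkeeping for convergents recorded just before the statement: $\deg(p_j) - \deg(q_j) = \delta_1$ and $\deg(p_j) - \deg(p_{j-1}) = \delta_{j+1}$ (and likewise for the $q_j$). First I would establish the base case $j=2$. From $h_2 = (c_2 p_{m-4} h_1 - q_{m-3})/p_{m-3}$, Lemma~\ref{l:firstJ} gives $\deg h_1 = g+1-\delta_1 = \delta_0-\delta_1$. Then $\deg(p_{m-4} h_1) = \deg p_{m-4} + \delta_0 - \delta_1$ while $\deg q_{m-3} = \deg p_{m-3} - \delta_1$; using $\deg p_{m-3} - \deg p_{m-4} = \delta_{m-3}$ and the symmetry $\delta_{m-j} = \delta_j$, both terms in the numerator have the same degree, namely $\deg p_{m-3} - \delta_1 = \deg p_{m-4} + \delta_{m-3} - \delta_1$. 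Hence generically the numerator has this degree and $\deg h_2 = (\deg p_{m-3} - \delta_1) - \deg p_{m-3} = -\delta_1$; but wait — I must instead compute it as $\deg p_{m-4} + \delta_{m-3} - \delta_1 - \deg p_{m-3}$, which after substituting $\delta_{m-3} = \delta_3$... the honest computation is that the numerator degree is $\delta_0 - \delta_1 + \deg p_{m-4}$ corrected against $\deg p_{m-3}$, yielding $\deg h_2 = \delta_0 - \delta_1 - \delta_{m-3} = \delta_0 - \delta_1 - \delta_{m-3}$, and then $\delta_{m-3}$... I would at this point simply carry the computation through carefully and use $\delta_{m-j}=\delta_j$ together with $j=2$, $m-1 = $ index of $a_1$ matched with $a_{m-1}$, to land on $\deg h_2 = \delta_0 - \delta_1 - \delta_2$. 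The cancellation of the two leading terms in the numerator is exactly the event ``$h_2 = 0$'', which by the degree accounting happens precisely when $\delta_0 + \delta_1 = \delta_1 + \delta_2$, i.e. $\delta_0 = \delta_2$; but $\delta_0 = g+1$ and $\delta_2 \le g$ unless $m \le 2$ (where $a_2$ wraps to $2\kappa a_0$), matching the asserted characterization.

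Next I would run the inductive step for $3 \le j \le m$ using
\[
h_j=\dfrac{c_j (p_{j-3} h_{j-1} + q_{j-3}) p_{m-(j+2)} - p_{j-4}q_{m-(j+1)}}{p_{m-(j+1)} p_{j-4}}\,.
\]
Assuming $\deg h_{j-1} = \delta_0 - \delta_{j-2} - \delta_{j-1}$, one checks $\deg(p_{j-3} h_{j-1}) = \deg p_{j-3} + \delta_0 - \delta_{j-2} - \delta_{j-1}$ versus $\deg q_{j-3} = \deg p_{j-3} - \delta_1$; these agree iff $\delta_0 - \delta_{j-2} - \delta_{j-1} = -\delta_1$, i.e. the previous ``equality case'' — so as long as $h_{j-1}\ne 0$ the term $p_{j-3}h_{j-1}$ dominates and $\deg(p_{j-3}h_{j-1}+q_{j-3}) = \deg p_{j-3} + \delta_0 - \delta_{j-2} - \delta_{j-1}$. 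Multiplying by $p_{m-(j+2)}$ and comparing with the degree of $p_{j-4} q_{m-(j+1)}$ — here I use $\deg p_{m-(j+2)} - \deg p_{m-(j+1)} = \delta_{m-j-1} = \delta_{j+1}$ and $\deg q_{m-(j+1)} = \deg p_{m-(j+1)} - \delta_1$ — the two numerator terms again come out to the same degree, so the degree of $h_j$ is forced to be $\le$ that value minus $\deg(p_{m-(j+1)}p_{j-4})$, and a short simplification with $\delta_{m-j}=\delta_j$ gives exactly $\delta_0 - \delta_{j-1} - \delta_j$, \emph{unless} the leading coefficients of the two numerator terms cancel. That cancellation is the ``$h_j=0$'' event, and tracking the leading coefficients through the recursion shows it occurs iff $\delta_0 - \delta_{j-2} - \delta_{j-1} = -\delta_1$ was \emph{already} an equality at stage $j-1$ causing degree drop, propagating to the condition $\delta_0 + \delta_1 = \delta_{j-1} + \delta_j$ at stage $j$; I would make this precise by carrying an explicit formula for $\text{lc}(h_j)$ alongside the degree, in the spirit of Lemma~\ref{l:firstJ}. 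The claim $h_m = 0$ then follows because the top of \eqref{eq:topValue}, equality $\delta_{m-1} + \delta_m = \delta_0 + \delta_1$ (with $\delta_m = \delta_0$, $\delta_{m-1} = \delta_1$), holds identically.

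The main obstacle I anticipate is bookkeeping the \emph{leading coefficients}, not the degrees: the degree statement alone is a routine (if fiddly) induction, but the ``if and only if'' for vanishing requires showing that the two competing terms in each numerator have leading coefficients whose difference vanishes exactly under the stated degree equality, and never spuriously. To handle this I would, in parallel with the degree induction, maintain an inductive formula of the form $\text{lc}(h_j) = (\text{explicit nonzero constant depending on }\kappa) \cdot \text{lc}(a_1)^{\pm 1}$ whenever $\delta_0+\delta_1 \ne \delta_{j-1}+\delta_j$, using the known leading-coefficient relations among $p_j, q_j$ (which follow from the skew-symmetry of the quasi-period and Theorem~\ref{t:formPeriodic}); the vanishing case is then simply the degenerate specialization of that formula. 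The palindromic identity $h_{m+1-j} = h_j$ stated in the excerpt can be invoked, if convenient, to reduce the range of $j$ one must treat directly, though it is not strictly needed for this lemma.
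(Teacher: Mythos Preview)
Your overall plan---induction on $j$ using the degree bookkeeping $\deg p_j-\deg q_j=\delta_1$, $\deg p_j-\deg p_{j-1}=\delta_{j+1}$, and $\delta_{m-j}=\delta_j$---is exactly the paper's approach. But the execution has two genuine gaps.

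First, your central structural claim is wrong: the two summands in the numerator of $h_j$ do \emph{not} have the same degree. In the base case you write $\deg p_{m-3}-\deg p_{m-4}=\delta_{m-3}$; the correct value is $\delta_{m-2}=\delta_2$, and then $\deg(p_{m-4}h_1)-\deg q_{m-3}=\delta_0-\delta_2>0$, so the first summand strictly dominates and $\deg h_2=\delta_0-\delta_1-\delta_2$ with no cancellation to analyze. The same happens for general $j$ (you also have a sign/index slip: $\deg p_{m-(j+2)}-\deg p_{m-(j+1)}=-\delta_{m-j}=-\delta_j$, not $+\delta_{j+1}$): when $h_{j-1}\neq 0$ the first numerator summand beats the second by $\delta_0+\delta_1-\delta_{j-1}-\delta_j$, so the degree of $h_j$ is read off from the first term alone. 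Consequently your proposed program of tracking leading coefficients to detect cancellation between the two summands is solving a problem that does not arise. The paper instead handles $h_j=0$ directly: if the numerator vanishes, its two summands must have equal degree, and that equality \emph{is} the relation $\delta_0+\delta_1=\delta_{j-1}+\delta_j$.

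Second, you omit the case $h_{j-1}=0$ in the inductive step. You note that then $p_{j-3}h_{j-1}$ no longer dominates $q_{j-3}$, but you do not compute $\deg h_j$ in that situation. The paper does: with $h_{j-1}=0$ the numerator degree comes from $q_{j-3}p_{m-(j+2)}$, and one uses the inductive hypothesis $\delta_0+\delta_1=\delta_{j-2}+\delta_{j-1}$ (equivalently $\delta_{j-2}-\delta_1=\delta_0-\delta_{j-1}$) to again obtain $\deg h_j=\delta_0-\delta_{j-1}-\delta_j$. Without this branch the induction does not close.
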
 

\begin{proof}  We argue by induction; for brevity's sake, we do not write out the straightforward proofs for the base cases.     

Consider first the case of $h_j \neq 0$.  Since
  $\deg( p_{j-4}q_{m-(j+1)}) - \deg(p_{m-(j+1)} p_{j-4}\,)$ is clearly negative, the degree of the numerator  in $h_j$ as given in Equation~\eqref{e:defHsubJ}  is determined by its first summand. 
Since $\deg(p_{j-3} ) > \deg(q_{j-3})$, when $h_{j-1} \neq 0$   we have
\[
\begin{aligned}
 \deg(h_j) &= \deg( h_{j-1})  + \deg(p_{j-3}) + \deg(p_{m-(j+2)}) - \deg(p_{j-4}) - \deg(p_{m-(j+1)})\\
  &= \deg( h_{j-1})  +\big(  \deg(p_{j-3}) - \deg(p_{j-4})\big) + \big(\deg(p_{m-(j+2)})  - \deg(p_{m-(j+1)})\big)\\
  &=  \deg( h_{j-1})  + \delta_{j-2}   - \delta_{m-j} = \deg( h_{j-1})  + \delta_{j-2}   - \delta_{j}\\
  &= \delta_0 -\delta_{j-1} - \delta_j\,.
 \end{aligned} 
 \]

If $h_{j-1} = 0$,  then 
\[
\begin{aligned}
 \deg(h_j) &= \deg(q_{j-3}) + \deg(p_{m-(j+2)}) - \deg(p_{j-4}) - \deg(p_{m-(j+1)})\\
&= \big( \deg(q_{j-3}) - \deg(p_{j-4})\big) + \big(\deg(p_{m-(j+2)})  - \deg(p_{m-(j+1)})\big)\\
&= \big(  \delta_{j-2} -  \delta_1\big) -  \delta_{m-j} = \delta_{j-2} -  \delta_1 -  \delta_{j} \\
 &= \delta_0 -\delta_{j-1} - \delta_j\,.
 \end{aligned} 
 \]
This last equality is by the induction argument whose hypothesis is: $h_{j-1} = 0$ implies $\delta_{j-2} = \delta_0 + \delta_1-\delta_{j-1}$.

We turn now to the case  of $h_j = 0$.  We must have
\[
\begin{aligned}
0 &= \deg(\, (p_{j-3} h_{j-1} + q_{j-3}) p_{m-(j+2)} ) -  \deg( p_{j-4}q_{m-(j+1)})\\
&= \deg(\, (p_{j-3} h_{j-1} + q_{j-3}) - \deg(  p_{j-4})    + \deg(p_{m-(j+2)} ) -  \deg(q_{m-(j+1)})\\
&= \deg(\, p_{j-3} h_{j-1} + q_{j-3}) - \deg(  p_{j-4})   -  \delta_{m-j} +   \delta_1\,.
 \end{aligned} 
 \]
If also $h_{j-1} = 0$,  then we find that $\delta_{j-2} = \delta_j$;  again by induction, the result follows.   Otherwise, we have
\[0 =  \deg(\, p_{j-3} h_{j-1}) - \deg(  p_{j-4})   -  \delta_{m-j} +   \delta_1\,,\]
with $\deg(\,h_{j-1}\,) = \delta_0- \delta_{j-2} - \delta_{j-1}$.  The result thus also easily follows.
 \end{proof}
 
 \bigskip
\begin{Lem}\label{l:hTwoVanishes} We have $h_2 = 0$ if and only if $m\le2$.
 \end{Lem}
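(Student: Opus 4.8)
The plan is to reduce the claim, via the degree dictionary of Lemma~\ref{l:hSeqDegrees} (established above), to a question about the single degree $\delta_2 = \deg(a_2)$, which is then decided by the bounds of Theorem~\ref{t:quasiPeriodAndOrder}. Indeed, applying Lemma~\ref{l:hSeqDegrees} at $j=2$ --- which is legitimate precisely when $m \ge 2$ --- tells us that $h_2 = 0$ if and only if $\delta_0 + \delta_1 = \delta_1 + \delta_2$, that is, if and only if $\delta_2 = \delta_0 = g+1$. So everything comes down to deciding for which $m$ one has $\deg(a_2) = g+1$.

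For $m \ge 2$ this is immediate. If $m = 2$, then $\delta_2 = \delta_m = \delta_0 = g+1$ by the symmetry $\delta_{m-j} = \delta_j$ evaluated at $j = 0$ (concretely, $a_2 = 2\kappa a_0$ in the normal form \eqref{eq:basicForm}), so $h_2 = 0$; this is just the special case $j = m$ of Lemma~\ref{l:hSeqDegrees}. If $m \ge 3$, then $2 \le m-1$, so $a_2$ is one of the ``internal'' partial quotients $a_1, \dots, a_{m-1}$, and Theorem~\ref{t:quasiPeriodAndOrder} forces $1 \le \delta_2 \le g < g+1 = \delta_0$; hence $\delta_2 \ne \delta_0$ and $h_2 \ne 0$. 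Combining the two cases, for $m \ge 2$ we have $h_2 = 0$ exactly when $m = 2$.

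It remains to dispose of the degenerate case $m = 1$, where the indices occurring in Definition~\ref{def:Hseq} for $h_2$ fall below $-1$. Here I would extend the recurrence for the $p_j, q_j$ backwards: from $p_{-2} = 0$, $p_{-1} = 1$, $q_{-2} = 1$, $q_{-1} = 0$ one gets $p_{-3} = 1$, while $h_1 = c_1 q_0/p_{-1} = c_1$. Since $c_1 c_2 = \kappa^{-1}\kappa = 1$, the numerator $c_2 p_{-3} h_1 - q_{-2} = c_1 c_2 - 1$ of $h_2$ vanishes, so $h_2 = 0$ --- either by the obvious convention for the resulting $0/0$, or simply because $h_2$ is not among $h_1, \dots, h_m$ when $m = 1$. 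I do not expect a genuine obstacle here: once Lemma~\ref{l:hSeqDegrees} is in hand the argument is essentially one line, and the only care required is to respect the hypothesis $j \le m$ when invoking that lemma and to keep the small-$m$ boundary conventions straight.
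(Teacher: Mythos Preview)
Your argument is correct and follows the same route as the paper: both proofs invoke the degree criterion of Lemma~\ref{l:hSeqDegrees} at $j=2$ to reduce $h_2=0$ to the equation $\delta_2=g+1$, and then appeal to the bound $\deg a_i\le g$ for $1\le i\le m-1$ from Theorem~\ref{t:quasiPeriodAndOrder} to force $m\le 2$. The paper's proof is simply terser---it treats the converse as already contained in the ``$h_m=0$'' clause of the previous lemma---whereas you spell out both directions and attempt to handle the boundary case $m=1$ explicitly; since Definition~\ref{def:Hseq} only defines $h_j$ for $j\le m$, the paper implicitly works under $m\ge 2$ and does not address that case at all.
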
 
\begin{proof} From the previous result, we need only show that $h_2 =0$ implies $m\le 2$.   The vanishing of $h_2$ gives that $g+1 + \delta_1 = \delta_1 + \delta_2$.  That is, $\deg \, a_2 = g+1$,  which  by Theorem~\ref{t:quasiPeriodAndOrder}  signals the end of the (quasi-)period. 
\end{proof}

 \bigskip 
 There is an alternate expression for $h_j$. This follows directly from Daowsud's thesis \cite{Daowsud},  see sections 3.1, 3.2 and especially (3.15).     
\begin{Prop}\label{p:secondExpressionHj}(Daowsud)  If $j \ge 5$, then 
\[ h_j = \dfrac{p_{j-3}(p_{j-4}h_{j-2}+ q_{j-4} -a_{j-1} p_{j-5} h_{j-1}) - p_{j-5}  q_{j-2}}{p_{j-5} p_{j-4}}\,.\]  
\end{Prop}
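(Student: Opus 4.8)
The plan is to derive the alternate formula directly from the defining recursion \eqref{e:defHsubJ}, by eliminating every ``tail'' approximant. Introduce the abbreviation $w_k := p_{k-3}h_{k-1}+q_{k-3}$ (so that the bracketed factor $p_{j-4}h_{j-2}+q_{j-4}$ on the right of the proposition is exactly $w_{j-1}$, while $p_{j-3}h_{j-1}+q_{j-3}=w_j$), and note that \eqref{e:defHsubJ} can be rewritten, for $3\le k\le m$, as $c_k\,w_k\,p_{m-(k+2)}=p_{k-4}\bigl(h_k\,p_{m-(k+1)}+q_{m-(k+1)}\bigr)$. Using the three-term recurrence $q_{j-2}=a_{j-1}q_{j-3}+q_{j-4}$ one checks that the proposition is equivalent to
\[
h_j\,p_{j-5}\,p_{j-4}\;=\;p_{j-3}\,w_{j-1}\;-\;a_{j-1}\,p_{j-5}\,w_j\;-\;p_{j-5}\,q_{j-4}\,.
\]
I would substitute $h_j$ from \eqref{e:defHsubJ} at index $j$ (legitimate since $j\ge 5>3$), and the rewrite above at index $j-1$, namely $c_{j-1}\,w_{j-1}\,p_{m-(j+1)}=p_{j-5}\bigl(h_{j-1}\,p_{m-j}+q_{m-j}\bigr)$, so that the whole equation now involves only $h_{j-1}$ and the convergents at head indices near $j$ and tail indices near $m-j$.

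Next I would merge the two index ranges. The recurrence $p_{m-j}=a_{m+1-j}\,p_{m-(j+1)}+p_{m-(j+2)}$ and its $q$-analogue, combined with the skew symmetry of the quasi-period from Theorem~\ref{t:formPeriodic} written in the form $c_j\,a_{m+1-j}=a_{j-1}$ (equivalently $a_{m-(j-1)}=c_{j-1}a_{j-1}$, valid for $2\le j\le m$; recall $c_jc_{j-1}=1$ and $c_j=c_{j-2}$), produce the collapse $c_j\,p_{m-(j+2)}+a_{j-1}\,p_{m-(j+1)}=c_j\,p_{m-j}$. After inserting this, the terms carrying the factor $h_{j-1}$ cancel identically, and one is reduced to the purely ``convergent'' identity
\[
c_j\bigl(p_{m-j}\,q_{j-3}-p_{j-3}\,q_{m-j}\bigr)\;=\;p_{j-4}\,q_{m-(j+1)}-p_{m-(j+1)}\,q_{j-4}\,.
\]
This is a reversal symmetry for the convergents of the purely periodic $1/(\sqrt{f(x)}-a_0)$, of the same flavour as Lemma~\ref{l:firstJ}; I would prove it either by the $2\times2$ matrix formalism (transposing and reversing the product of the matrices $\left(\begin{smallmatrix}a_i&1\\1&0\end{smallmatrix}\right)$ taken over a full period, exploiting the skew-palindromic shape recorded in Theorem~\ref{t:formPeriodic}), or by a short induction on $j$ that uses only the three-term recurrences together with $c_jc_{j-1}=1$ and $c_j\,a_{m+1-j}=a_{j-1}$. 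An induction on $j$ is in any event the cleanest way to organize the entire argument, with the few small cases feeding into $j=5$ checked by hand (this is also where one matches the normalization of Daowsud's (3.15)).

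The main obstacle is precisely the last displayed reversal identity: it is the one point at which the genuine input --- the skew-palindromic structure of the period, i.e.\ the substance of Theorem~\ref{t:formPeriodic} --- is used, everything upstream being formal manipulation with the recurrences. The one delicate bookkeeping point before that is checking that the skew-symmetry substitution $a_{m+1-j}\mapsto c_j^{-1}a_{j-1}$ interacts with the scalars $c_j,c_{j-1}$ already present so as to leave exactly the coefficient $-a_{j-1}$ in the numerator of the claimed formula, with no stray power of the skew value $\kappa$ surviving; after that, the remaining simplifications are entirely routine.
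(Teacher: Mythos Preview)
The paper itself does not prove this proposition: it merely attributes the result to Daowsud's thesis and points to sections~3.1--3.2 and equation~(3.15) there. So there is no in-paper argument against which to compare your proposal line by line.

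That said, your plan is sound and carries through. The reduction via $w_k=p_{k-3}h_{k-1}+q_{k-3}$ and $q_{j-2}=a_{j-1}q_{j-3}+q_{j-4}$ is correct: the proposition is indeed equivalent to
\[
h_j\,p_{j-5}p_{j-4}=p_{j-3}\,w_{j-1}-a_{j-1}\,p_{j-5}\,w_j-p_{j-5}\,q_{j-4}.
\]
Multiplying by $p_{m-(j+1)}$, inserting the definitional relations $c_k\,w_k\,p_{m-(k+2)}=p_{k-4}(h_k\,p_{m-(k+1)}+q_{m-(k+1)})$ at $k=j$ and $k=j-1$, and using the ``collapse'' $c_j\,p_{m-j}=a_{j-1}\,p_{m-(j+1)}+c_j\,p_{m-(j+2)}$ (which is just the three-term recurrence for $p_{m-j}$ combined with $c_j\,a_{m+1-j}=a_{j-1}$), the $h_{j-1}$-terms cancel exactly as you say, leaving precisely your reversal identity
\[
c_j\bigl(p_{m-j}q_{j-3}-p_{j-3}q_{m-j}\bigr)=p_{j-4}q_{m-(j+1)}-p_{m-(j+1)}q_{j-4}.
\]
This last identity does go through by the induction you outline: writing $X_j=p_{m-(j+1)}q_{j-3}-p_{j-3}q_{m-(j+1)}$, the recurrences give both sides at stage $j+1$ in terms of the stage-$j$ identity plus $X_j$, and the two residual $X_j$-coefficients match because $c_{j+1}a_{j-1}=a_{m+1-j}$ while $c_{j+1}c_j=1$. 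The base case is a direct check. One bookkeeping point worth stating explicitly in a final write-up is the exact normalization of the $c_j$ used in Definition~\ref{def:Hseq}: from $h_1=c_1\,q_{m-1}/p_{m-2}$ together with Lemma~\ref{l:firstJ} one sees $c_1=\kappa^{-1}$, hence $c_j=\kappa^{(-1)^j}$, which is what makes your substitution $a_{m+1-j}=c_j^{-1}a_{j-1}$ consistent with the skew-symmetry convention of Theorem~\ref{t:formPeriodic}.

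In short: your approach is correct and self-contained; the paper offers no alternative argument to contrast it with, only a citation.
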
 

\bigskip 
\begin{Lem}\label{l:noThreeInRow}  If $m>2$, then  for each $3\le j \le m/2$,    not  all three
$h_j, h_{j-1}, h_{j-2}$ are zero. 
 \end{Lem}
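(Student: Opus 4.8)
The plan is to argue by contradiction: assume $m > 2$ and that for some $j$ with $3 \le j \le m/2$ all three of $h_j, h_{j-1}, h_{j-2}$ vanish, and derive an impossibility. By Lemma~\ref{l:hSeqDegrees} (and Lemma~\ref{l:hTwoVanishes}, which rules out $h_2 = 0$ when $m > 2$), the vanishing of a given $h_i$ with $i > 1$ is equivalent to the degree identity $\delta_{i-1} + \delta_i = \delta_0 + \delta_1$. So under our assumption we would have three simultaneous relations:
\[
\delta_{j-3} + \delta_{j-2} = \delta_{j-2} + \delta_{j-1} = \delta_{j-1} + \delta_j = \delta_0 + \delta_1\,.
\]
The first two force $\delta_{j-3} = \delta_{j-1}$ and the last two force $\delta_{j-2} = \delta_j$; combined with $\delta_{j-3}+\delta_{j-2} = \delta_0 + \delta_1 = 2g+1$, these give a rigid combinatorial picture of the degree sequence near index $j$.

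The heart of the argument, though, must be analytic rather than purely combinatorial, since the degree relations alone are consistent (e.g. alternating degrees could in principle satisfy them). So the next step is to feed $h_{j-1} = h_{j-2} = 0$ into the recursion of Definition~\ref{def:Hseq}, or better into Daowsud's alternate expression in Proposition~\ref{p:secondExpressionHj} (valid for $j \ge 5$; the cases $j = 3, 4$ I would handle directly from \eqref{e:defHsubJ} and Lemma~\ref{l:firstJ}). With $h_{j-1} = 0$ and $h_{j-2} = 0$, Proposition~\ref{p:secondExpressionHj} collapses to
\[
h_j = \dfrac{p_{j-3}\,q_{j-4} - p_{j-5}\,q_{j-2}}{p_{j-5}\,p_{j-4}}\,,
\]
and now I want to show the numerator $p_{j-3} q_{j-4} - p_{j-5} q_{j-2}$ cannot vanish. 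This is where the standard convergent identities enter: the $p_i, q_i$ are built from the same recursion $p_i = a_{i+1}p_{i-1} + p_{i-2}$, so $p_i q_{i-1} - p_{i-1} q_i = (-1)^{i}$ (up to the usual sign convention from the initial conditions listed after Lemma~\ref{l:firstJ}), and more generally $p_{i} q_{i'} - p_{i'} q_{i}$ for $i' < i$ is a polynomial of predictable degree, namely $\deg p_i + \deg q_{i'}$ when the indices differ by more than one, which is $> 0$. In particular $p_{j-3}q_{j-4} - p_{j-5}q_{j-2}$, being a ``distance-$2$-ish'' combination, should be a nonzero polynomial of positive degree — here I would expand $p_{j-3} = a_{j-2}p_{j-4} + p_{j-5}$ and $q_{j-2} = a_{j-1}q_{j-3}+q_{j-4}$, and also use $q_{j-2} = a_{j-1}q_{j-3} + q_{j-4}$ together with $p_{j-4}q_{j-5} - p_{j-5}q_{j-4} = \pm 1$ to reduce the expression to $\pm a_{j-2} - a_{j-1}(p_{j-5}q_{j-3} - \cdots)$ and track that its degree is $\delta_0 - \delta_{j-1} - \delta_j \ge 1$ (strict, because equality $\delta_0 + \delta_1 = \delta_{j-1}+\delta_j$ is exactly the excluded $h_j = 0$). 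Thus $h_j \ne 0$, contradicting the assumption.

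The main obstacle I anticipate is bookkeeping the signs and index ranges so that Proposition~\ref{p:secondExpressionHj} actually applies and the ``distance between indices'' in the Wronskian-type identity is genuinely $\ge 2$ — i.e. confirming $j-3 \ge j-5+2$ trivially but also that $j - 5 \ge -2$, so that the convergents are the honestly-defined ones from the initial conditions; for $j = 3, 4$ this fails and those small cases need a separate, hands-on check using $h_1 = c_1 q_{m-1}/p_{m-2}$, Lemma~\ref{l:firstJ}, and the second line of \eqref{e:defHsubJ}. A secondary subtlety is the constraint $j \le m/2$: it guarantees all the indices $m-(j+1), m-(j+2), \dots$ appearing in $h_j$ are still ``inside'' the quasi-period where the degree formulas $\deg p_i - \deg p_{i-1} = \delta_{i+1}$ hold without wraparound effects, and it is also what the palindromic symmetry $h_{m+1-j} = h_j$ trades on — I would note explicitly where that bound is used so the reader sees it is not decorative.
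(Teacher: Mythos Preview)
Your plan has a real gap at its central step. After substituting $h_{j-1}=h_{j-2}=0$ into Proposition~\ref{p:secondExpressionHj} you correctly obtain
\[
h_j \;=\; \dfrac{p_{j-3}\,q_{j-4}-p_{j-5}\,q_{j-2}}{p_{j-5}\,p_{j-4}}\,,
\]
but the claim that the numerator is ``a nonzero polynomial of positive degree'' by Wronskian-type identities does not hold. This combination is not of the form $p_i q_{i'}-p_{i'}q_i$; expanding with $p_{j-3}=a_{j-2}p_{j-4}+p_{j-5}$ and $q_{j-2}=a_{j-1}q_{j-3}+q_{j-4}$ gives
\[
p_{j-3}q_{j-4}-p_{j-5}q_{j-2}\;=\;a_{j-2}\,p_{j-4}q_{j-4}\;-\;a_{j-1}\,p_{j-5}q_{j-3}\,,
\]
and the two summands have \emph{equal} degree, since $\deg(a_{j-2}p_{j-4}q_{j-4})-\deg(a_{j-1}p_{j-5}q_{j-3})=\delta_{j-3}-\delta_{j-1}$, which is $0$ precisely because your hypotheses $h_{j-2}=h_{j-1}=0$ force $\delta_{j-3}=\delta_{j-1}$. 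So the leading terms can cancel, and you cannot read off nonvanishing. Your fallback sentence (``track that its degree is $\delta_0-\delta_{j-1}-\delta_j\ge 1$'') is circular: that degree formula for $h_j$ is only valid once $h_j\neq 0$ is known, and in any case the hypotheses do not constrain $\delta_j$.

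The paper does not attempt this direct nonvanishing. Instead it runs a \emph{descent}: assuming $h_j=h_{j-1}=0$, it uses Proposition~\ref{p:secondExpressionHj} to write both $h_{j+1}$ and $h_{j-2}$ over the common denominator $p_{j-3}p_{j-4}$, shows their numerators differ by $(a_{j-2}-a_j)p_{j-4}q_{j-2}$, and then checks (via a separate expansion of $h_{j+1}p_{j-3}p_{j-4}$) that $h_{j+1}=0$ forces $a_{j-2}=a_j$, hence $h_{j-2}=h_{j+1}=0$. Thus three consecutive zeros at $j-1,j,j+1$ produce a zero at $j-2$; iterating pushes the vanishing down to $h_2$, contradicting Lemma~\ref{l:hTwoVanishes}. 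If you try to push your expression further (e.g.\ substituting $q_{j-3}=a_{j-2}q_{j-4}+q_{j-5}$ and $p_{j-4}=a_{j-3}p_{j-5}+p_{j-6}$) you will see the same descent structure emerge, with the obstruction now governed by $a_{j-3}-a_{j-1}$; so the missing idea is exactly this propagation, not a one-shot identity.
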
 
\begin{proof}    We again skip the straightforward proofs in the cases of small $j$.   Suppose that $j\ge 6$ and  $h_j = h_{j-1} = 0$.  
From Proposition~\ref{p:secondExpressionHj}, we find 
\[ h_{j+1} = \dfrac{p_{j-2} q_{j-3} -  p_{j-4}q_{j-1}}{p_{j-3}p_{j-4}} \]
and 
 \[ h_{j-2} =   \dfrac{p_{j-5} q_{j-2} -  p_{j-3}q_{j-4}}{p_{j-3}p_{j-4}}.\] 
We thus consider
 \[ 
 \begin{aligned}
  p_{j-2} q_{j-3} -  p_{j-4}q_{j-1} - [p_{j-5} q_{j-2} -  p_{j-3}q_{j-4}] &=   p_{j-2} q_{j-3} + p_{j-3}q_{j-4} -  [p_{j-4}q_{j-1} +p_{j-5} q_{j-2}]\\
                                                                                                           &=   p_{j-2} q_{j-3} + p_{j-3}q_{j-4} -  [p_{j-4}q_{j-1} +p_{j-5} q_{j-2}]\\ 
                                                                                                           &= (a_{j-1}p_{j-3} + p_{j-4})q_{j-3} + p_{j-3}q_{j-4}\\
                                                                                                           &\;\;\;\; -  [p_{j-4}(a_j q_{j-2}+ q_{j-3}) +p_{j-5} q_{j-2}]\\ 
                                                                                                           &= p_{j-3} (a_{j-1}q_{j-3} + q_{j-4}) - (a_j p_{j-4} + p_{j-5}) q_{j-2}\\
                                                                                                           &= p_{j-3}q_{j-2} - (a_j p_{j-4} + p_{j-5}) q_{j-2}\,.
\end{aligned}
\]
We find that the above equals zero if and only if $a_j = a_{j-2}$.    Thus, $h_j = h_{j-1} = 0$ implies the equivalence of $h_{j-2} = h_{j+1}$  with $a_j = a_{j-2}$.

We now find that 
\[
\begin{aligned}\label{t.3}
   h_{j+1} p_{j-3}p_{j-4}&= p_{j-2} q_{j-3} -  p_{j-4}q_{j-1} \\
                                      &=  (a_{j-1}   p_{j-3}+p_{j-4} )  q_{j-3} -  p_{j-4}(a_j q_{j-2}    + q_{j-3})\\
                                      &= a_{j-1}   p_{j-3}q_{j-3} -  a_j q_{j-2}p_{j-4}\\
                                      &=  a_{j-1}  (a_{j-2}   p_{j-4}+p_{j-5} ) q_{j-3} -  a_j (a_{j-1} q_{j-3}    + q_{j-4})p_{j-4}\\
                                      &=  a_{j-1}  (a_{j-2} -a_j)  p_{j-4}  q_{j-3} + a_{j-1}  p_{j-5} q_{j-3} -  a_j   q_{j-4} p_{j-4}\,.
 \end{aligned} 
 \]
\noindent 
But, $h_j = h_{j-1} = 0$ implies that $\deg a_j = \deg a_{j-2}$ and in particular $\deg (a_{j-1} a_{j-2}) > \deg a_j$.   From this, if $a_{j-2} \neq a_j$  then $ a_{j-1}  (a_{j-2} -a_j)  p_{j-4}  q_{j-3} $ is 
the unique highest degree summand in the final line of this last display.    Thus, if $h_{j+1}=0$ then this term must vanish,  therefore $a_{j-2} = a_j$   and thus also $h_{j-2} = h_{j+1}$ must vanish.    
From this, we can continue with ever decreasing indices $k$ with $h_k = 0$,  until we reach a contradiction. 
 
\end{proof}

As we mentioned above, in her thesis \cite{Daowsud},  Daowsud also proves the following. 
\begin{Thm}  Each $h_j$ is a polynomial.                   
\end{Thm}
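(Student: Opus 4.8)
The plan is to prove that every $h_j$, $1\le j\le m$, lies in $k[x]$ by induction on $j$, reducing the inductive step to a pair of divisibility statements about the convergent polynomials $p_i,q_i$ and then settling those from the ``$\sqrt{f}$'' structure of the expansion rather than from any formal property of continued fractions. The base case is immediate from Lemma~\ref{l:firstJ}: since $c_1=\kappa^{-1}$, one has $h_1 = c_1 q_{m-1}/p_{m-2} = q_{m-1}/(\kappa p_{m-2}) = f(x)-a_0^{2}\in k[x]$, and $h_2,h_3,h_4$ succumb to the same kind of direct manipulation, using $p_{-2}=0$, $p_{-1}=1$, $q_{-2}=1$, $q_{-1}=0$ together with the first values of $p_i,q_i$ listed after Lemma~\ref{l:firstJ}.

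For the inductive step I would clear denominators in Definition~\ref{def:Hseq} and rewrite the recursion as
\[ p_{j-4}\bigl(h_j\,p_{m-(j+1)}+q_{m-(j+1)}\bigr) \;=\; c_j\bigl(p_{j-3}h_{j-1}+q_{j-3}\bigr)\,p_{m-(j+2)}. \]
Since $p_{j-3}=a_{j-2}p_{j-4}+p_{j-5}$ forces $\gcd(p_{j-3},p_{j-4})=\gcd(p_{j-4},p_{j-5})=\cdots=1$, and since $p_i q_{i-1}-p_{i-1}q_i=\pm 1$, the right-hand side is divisible by $p_{j-4}$ precisely when $p_{j-4}\mid p_{j-3}h_{j-1}+q_{j-3}$; granting that, $h_j\,p_{m-(j+1)}+q_{m-(j+1)}$ equals the polynomial $c_j\bigl((p_{j-3}h_{j-1}+q_{j-3})/p_{j-4}\bigr)p_{m-(j+2)}$, and a companion congruence modulo $p_{m-(j+1)}$ then gives that $h_j$ itself is a polynomial. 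So the whole problem is reduced to propagating two congruences of the form $p_{i-1}\mid p_i\,h_{(\cdot)}+q_{(\cdot)}$ along the recursion. The palindromic symmetry $h_{m+1-j}=h_j$ is an asset here: it furnishes a second expression for $h_j$ with the ``mirror-image'' indices in the denominator, so that divisibility by $p_{m-(j+1)}$ can be read off from a forward divisibility at the reflected index, and one can run the induction through this mirror and through the alternate expression of Proposition~\ref{p:secondExpressionHj} at once — playing the several expressions off one another exactly as in the manipulation already performed in the proof of Lemma~\ref{l:noThreeInRow}.

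The ingredient that actually produces the cancellations is the standard polynomial description of the complete quotients: writing $\alpha_i=(y+P_i)/Q_i$ with $P_i,Q_i\in k[x]$, $Q_i\mid f-P_i^{2}$, and the recursions $P_{i+1}=a_iQ_i-P_i$, $Q_iQ_{i+1}=f-P_{i+1}^{2}$, one has the usual identities linking $p_i,q_i$ to $P_{i+2},Q_{i+2}$ (the generalizations of the relation $f-a_0^{2}=q_{m-1}/(\kappa p_{m-2})$ of Lemma~\ref{l:firstJ}), together with the relations among the $P_i,Q_i$ across a quasi-period that underlie the normal form of Theorem~\ref{t:formPeriodic}. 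Substituting these into the numerator occurring in Definition~\ref{def:Hseq} rewrites it as a manifestly polynomial combination of $P_i,Q_i,p_i,q_i$; the detailed elaboration of this is carried out in \cite{Daowsud}. (Conceptually the same fact can be read off from the dictionary between periodic continued fractions and reduced divisor arithmetic on $\text{Jac}(\mathcal C)$, under which the $h_j$ correspond, up to leading coefficients, to the monic polynomials in the Mumford representations of the multiples of $D_{\infty}$; but the elementary route keeps everything inside $k[x]$ throughout.)

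The main obstacle is combinatorial rather than conceptual: the recursion interleaves the ``forward'' indices $j-3,j-4$ with the ``reflected'' indices $m-(j+1),m-(j+2)$, so one must keep the palindromic data ($\delta_{m-j}=\delta_j$, $\deg p_i-\deg q_i=\delta_1$, $\deg p_i-\deg p_{i-1}=\delta_{i+1}$, and $h_{m+1-j}=h_j$) carefully synchronized and verify at each stage that the intended top-degree term genuinely survives rather than being killed by cancellation — the same sort of leading-coefficient argument used in Lemma~\ref{l:noThreeInRow}. Once the complete-quotient identities are in place and the several recursions for $h_j$ are available, the divisibility checks are routine though lengthy; the degree predictions of Proposition~\ref{p:formHj} furnish a running consistency check, since a polynomial of exactly the predicted degree $g+1-\delta_{j-1}-\delta_j$ is what must appear at each step.
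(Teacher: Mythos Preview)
The paper does not contain a proof of this theorem: it simply records the statement and refers the reader to Daowsud's thesis \cite{Daowsud} (see the parenthetical remark following Definition~\ref{def:Hseq} and the sentence immediately preceding the theorem). There is therefore no argument in the paper against which to compare your proposal.

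Your sketch is considerably more substantive than what the paper offers, and it identifies the right ingredients: coprimality of consecutive $p_i$, the complete-quotient presentation $\alpha_i=(y+P_i)/Q_i$ with its recursions $P_{i+1}=a_iQ_i-P_i$, $Q_iQ_{i+1}=f-P_{i+1}^{2}$, the convergent identities that generalize Lemma~\ref{l:firstJ}, and the palindromic symmetry $h_{m+1-j}=h_j$. The reduction of the inductive step to the divisibility $p_{j-4}\mid p_{j-3}h_{j-1}+q_{j-3}$ together with a mirror congruence modulo $p_{m-(j+1)}$ is the natural line of attack.

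That said, as written this is a strategy rather than a proof. You defer the ``detailed elaboration'' to \cite{Daowsud} exactly as the paper does, and the crucial divisibilities are asserted rather than established. Even the claim that $h_2,h_3,h_4$ ``succumb to the same kind of direct manipulation'' is not innocent: for $h_2$ one already needs $p_{m-3}\mid c_2 p_{m-4}(f-a_0^{2})-q_{m-3}$, which is invisible from the continued-fraction recursions alone and genuinely requires the $P_i,Q_i$ identities (e.g.\ relations of the shape $(-1)^{i}Q_{i+1}=p_{i-1}^{2}-f\,q_{i-1}^{2}$ and $p_{i-1}p_{i-2}-f\,q_{i-1}q_{i-2}=(-1)^{i}P_{i+1}$, suitably adapted to the offset convention here). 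If you want a self-contained argument you must carry out at least one full passage through that substitution to exhibit the numerator of Definition~\ref{def:Hseq} as a polynomial combination of $P_i,Q_i,p_i,q_i$; without it the two congruences you isolate remain unproved claims.
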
 

\section{Solving for $f(x)$, a naive method}\label{s:NaiveMethod}   We deduce a naive method for finding curves over a field with low order torsion points on their Jacobians.    We assume that $g>1$ and $m>2$.

\medskip
\centerline{\bf Naive Method}

\smallskip
\noindent
{\tt Given:}  field $k$,  genus $g$,  torsion order $N$ (with $N > g+1$). 

\noindent
{\tt Searches for:}  $f(x)\in k[x]$ such that $y^2 = f(x)$ has divisor at infinity of order $N$. 

\bigskip 
\begin{enumerate}
\item Fix $m\in [N-g, (N-1)/g]$ 
\item Choose a symmetric partition $(\delta_1, \dots, \delta_{m-1})$ of $N-g-1$,  with each $g \ge \delta_i \ge 1$ 
and  $g+1 + \delta_1 = \delta_{j-1} + \delta_j$ satisfied at most twice in a row for indices less than $m/2$, and 
$g+1 \ge  \delta_{j-1} + \delta_j$ for all other such $j$.   Furthermore, if $m>2$, then  $g+1 \ge  \delta_{1} + \delta_2$ must hold. 
\item  Introduce variables $c_{i,j}$ for $1 \le i \le \lfloor m/2 \rfloor$ and $0\le j \le \delta_i$
\item Set $a_i = a_i(x) = \sum_{j=0}^{\delta_i}\, c_{i,j}\, x^j$,  $1 \le i \le \lfloor m/2 \rfloor$ 
\item If $m$ is even then set $\kappa = 1$, else assign a variable $\kappa$ taking values in $k\setminus\{0, 1\}$.
\item For $1 \le i \le \lfloor m/2 \rfloor$ set  $a_{m-i}(x) = \kappa^{\pm }a_i (x)$ with alternating powers of $\kappa$
\item Introduce variables $r_0, \dots, r_g$ and set $a_0 = a_0(x) = x^{g+1}+\sum_{j=0}^g\, r_j x^j$. 
\item Expand the various $p_j$ and $q_j$ in terms of the $a_i$. 
\item  Introduce  variables $b_0, \dots,  b_{\delta_0 - \delta_1 - 1}$
\item Set $h_1(x) = (2/c_{1,\delta_1}) \,x^{\delta_0 - \delta_1} + \sum_{i=0}^{\delta_0 - \delta_1-1}\, b_i x^i$.
\item Solve the equation $h_1 = \dfrac{\,q_{m-1}}{\kappa  p_{m-2}}$ in the form 
\[ \kappa  p_{m-2} h_1 - q_{m-3} - 2 \kappa a_0  q_{m-2} = 0\] 
for the various $b_j, r_j, c_{i,j}$ (and $\kappa$ as appropriate)  under the restriction that no $c_{i, \delta_i}$ vanishes.  
 
\end{enumerate}

By induction, one shows that collecting powers of $x$ in (11) leads to $N-\delta_1$ equations.   The number of variables is $M:= (\delta_0-\delta_1) + (\lfloor m/2\rfloor +  \sum_{i=1}^{\lfloor m/2 \rfloor} \, \delta_i ) + (m \pmod 2\,) + (g+1)$.    For an upper bound on $M$, we use $N \ge \lfloor m/2\rfloor +  \sum_{i=1}^{\lfloor m/2 \rfloor} \, \delta_i$ and $g \ge \delta_0-\delta_1$.   Similarly, for a (rough) lower bound, we use $\delta_0-\delta_1 \ge 1$, $\sum_{i=1}^{\lfloor m/2 \rfloor} \, \delta_i  \ge \frac{N-g-1}{2}\ge \frac{N-g-1}{2g}$,  and $\lfloor m/2\rfloor \ge \frac{N-1}{2g}\ge  \frac{N-g-1}{2g}$.   Therefore, the number $M$ of variables is such that
\[g+1 + N \ge M \ge g+1 + (N-1)/g\,.\]

\medskip

\section{Applications of the method}\label{s:Applications}   
\subsection{An impossible partition}    To illustrate this method,   we sketch how when $g=2, N=11, m= 6$ (this even $m$ is thus the period length), the partition of $N-g-1 =  2+1+2+1+2 = \delta_1 + \delta_2 + \delta_3  + \delta_4  + \delta_5$ gives no admissible solution (over any coefficient field).

For increased legibility, we set some variable names other than those given in the recipe. Let 
\begin{equation} 
\begin{aligned}
a_0&:=x^3+r_2x^2+r_1x+r_0\\
a_1&:=l_1x^2 +c_1x +k_1\\
a_2&:=l_2x+k_2 \\\
a_3&:=l_3x^2+c_3x+k_3\\
h_1&:=(2/l_1)  x+b_0\,  {\color{red} \text{ corrected}}, \text{and}\\
J&:=(p_4 h_1-q_3)-2a_0q_4\,,\\
\end{aligned}
\end{equation}
where  $l_i \neq 0$ for each $i$, and we must solve so that the polynomial
$J(x)$, which a priori is of degree 8,  in fact vanishes.     Eliminating coefficients of decreasing powers of $x$, leads to (in this order, from top to bottom) the admissible values
\begin{equation} 
\begin{aligned}
b_0&=\frac{-2(c_1 - r_2l_1)}{l_1^2} \\
k_1&=\frac{2(r_1 l_1^2-r_2 c_1l_1+c_1^2)}{l_1}\\
r_0&=\frac{- 2 c_1^2 l_1 r_2 -  c_1 l_2 l_1^2 r_1 + c_1 l_2 l_1^2 r_2^2 -  l_2 l_1^3 r_2 r_1 - l_1^2 + l_2c_1^3  }{l_1^3 l_2}\\
r_2&=\frac{(l_2 c_1 + l_1k_2)}{l_1l_2}
\end{aligned}
\end{equation}
 However, one finds that $J(x)$ then has its coefficient of $x^4$ being $-l_2 l_3^2$,  which cannot vanish!   Thus,  there is no genus two curve corresponding to this partition of $N-g-1$.

\subsection{New family in genus two}\label{ss:newFamily}  Of course, the naive method must in some cases lead to the existence of curves. {\color{red} UPDATED.}   Indeed, 
for $g=2, N=11, m= 8$   and the partition of   $N-g-1 =  1+1+1+2+1+1+1$,  we set $a_0, a_2$ as above, but now set $a_1:=l_1 x +k_1, a_3 := l_3x+k_3, a_4 := l_4x+c_4 x+ k_4$ and $J:=( p_6 h_1-q_5)-2  a_0 q_6$.  

Eliminating coefficients as above,  in order,
\[
\begin{aligned}
r_2 &= \frac{  b_1\, l_{1}^{2}+2\,k_1}{2\, l_1}\\
r_1 &= \frac{b_0 \,l_{1}^{2} l_2+ b_1 k_1 l_1 l_2 +2 }{2\, l_1 l_2}\\    
r_0  &= \frac{b_0 k_1 l_1 l_{2}^{2} + b_1 l_1 l_2 - 2 \, k_2}{2\, l_1 l_{2}^{2}}\\
b_0 &= -\frac{-b_1 k_2 l_1 l_2 l_3 - l_{2}^{3} l_3  +2 \, k_{2}^{2}  l_{3} - 2\, l_2}{l_1 l_{2}^{2}l_3}\\
b_1 &= -\frac{-k_1 l_{2}^{3} l_{3}^{2}+k_2 l_1 l_{2}^{2} l_{3}^{2}-2\, k_2 l_1 l_3 -2\, k_3 l_1 l_2}{ l_{1}^{2} l_2 l_3}\\
k_2 &= -\frac{k_{1}^{2}  l_2 l_3 - k_1 k_3  l_1 l_2 + l_{1}^{2} - l_1 l_3}{- k_1   l_1 l_3+ k_3 l_{1}^{2}}\\
k_1 &= \frac{k_3 l_1 l_2 l_{3}^{2} + 2\, l_{1}^{2}}{l_2 l_{3}^{3}}\\
c_4 &= -\frac{l_1 l_{2}^{3} l_{3}^6 - l_{2}^{3} l_{3}^{7} + 2\, l_2 l_{3}^{6} - 8\, k_3 l_{1}^{2}l_2 l_{3}^{2} - 16\, l_{1}^{3}}{4\, l_{1}^2 l_2 l_{3}^{2}}\\
k_4 &=[(l_{2}^{4} l_{3}^{12} - 2\, k_3 l_{1}^{2} l_{2}^{4} l_{3}^{8})(l_1 - l_3) + 2\, l_1 l_{2}^{2} l_{3}^{11} - 4\, l_{1}^{2} l_{2}^{2}l_{3}^{6} (k_3 l_{3}^{2} + l_{1}^{2}l_{2})\\
&\phantom{moveOverMorePlease}+   8\, l_{1}^{3}l_{2}^{2}l_{3}^{4} (k_{3}^{2}l_{1}+ l_2 l_{3}^{3} )+ 32\,l_{1}^{5} (k_3  l_2 l_{3}^{2}+ l_{1})]/8\, l_{1}^{4}l_{2}^{2}l_{3}^{5}
\end{aligned}
\]
brings us to where a choice of $l_1 =  l_3, l_3 = -\frac{l_{3}^5 + 8}{2\, l_{3}^3}$ results in a solution to $J(x) = 0$.   Computation shows that the Igusa invariants are independent of choice of the value of $k_3$; we set $k_3 = 0$ and $l_3 = u$ to give us the one dimensional family $\mathcal G_u$.

\subsection{Examples in genus greater than two}  In her thesis \cite{Daowsud} Daowsud gave 
examples for all possible genera  $g$ of curves defined over $\mathbb Q$ with Jacobian of divisor at infinity having torsion order 11.    For brevity's sake, we simply report the basic form of the expansions that she found when $g>2$.

\bigskip 
 
\noindent%
\begin{minipage}{\textwidth}
  \centering
\begin{tabular}{|c|c|c|}
\hline
$g$&$m$&$(\delta_0, \delta_1, \cdots, \delta_{m-1})$\\
\hline\hline
3&6&(4,2,1,1,1,2)\\
\hline
4&4&(5,1,2,2,1)\\
\hline
5&4&(6,1,3,1)\\
\hline
6&3&(7,2,2)\\
\hline
7&4&(8,1,1,1)\\
\hline
8&2&(9,2)\\
\hline
9&2&(10,1)\\
\hline
10&1&(11)\\
\hline\hline
\end{tabular}

  \medskip 

Table 1.  Genera, period length and partition of $N=1$ for examples in \cite{Daowsud} for higher genus.  
\end{minipage}
 
\vspace{.5 cm}
\section{Acknowledgment}  We thank Professor Dino Lorenzini   for pointing out counterexamples to the original theorem and asking  about the full isomorphism with Flynn's family.   We also gratefully acknowledge his encouragement to find a corrected new  family.

\vspace{.5 cm}

\section{Appendix:  Transcript of Maple supporting correction}
\includepdf[pages=-]{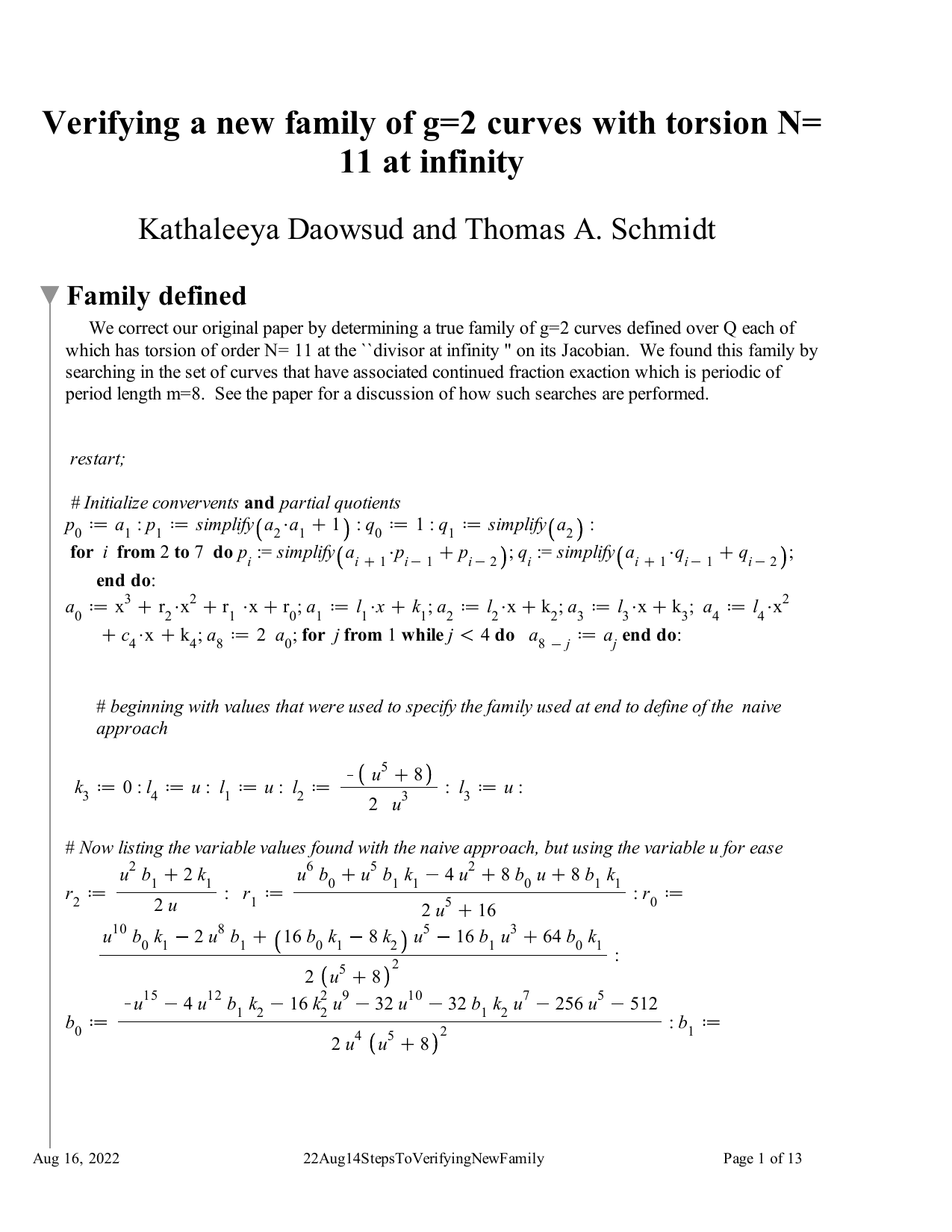}

\end{document}